\newtheorem{theorem}{Theorem}[section]
\newtheorem{lemma}[theorem]{Lemma}
\newtheorem{proposition}[theorem]{Proposition}
\newtheorem{corollary}[theorem]{Corollary}
\theoremstyle{definition}
\newtheorem{definition}[theorem]{Definition}
\theoremstyle{remark}
\newenvironment{definition-proposition}{\begin{def-prop} \em}{\end{def-prop}}
\def\P{\mathbb P_{\mathbb C}}
\numberwithin{equation}{section}
\newcommand{\C}{\mathbb{C}}
\begin{document}

\title{  \sc{ Two  dimensional Complex Kleinian Groups With  Four Complex  Lines in General Position in its Limit Set }}
\author{W. Barrera, A. Cano \& J. P. Navarrete }
\address{Waldemar Barrera: Universidad Aut\'onoma de Yucat\'an Facultad de Matem\'aticas, Anillo Perif\'erico
Norte Tablaje Cat 13615 Chuburn\'a Hidalgo, M\'erida Yucat\'an. M\'exico.\\
Angel Cano:   Instituto de Matem\'atica Pura e Aplicada (IMPA), Rio de Janeiro, Brazil.\\
Juan Pablo Navarrete:  Universidad Aut\'onoma de Yucat\'an Facultad de Matem\'aticas, Anillo Perif\'erico
Norte Tablaje Cat 13615 Chuburn\'a Hidalgo, M\'erida Yucat\'an. M\'exico.\\ }
\email{bvargas@uady.mx, angel@impa.br, jp.navarrete@uady.mx}

\thanks{Research partially supported by grants from CNPq}
\keywords{ kleinian groups,   projective complex plane,
discrete groups, limit set}

\subjclass{Primary: 32Q45, 37F45; Secondary 22E40, 57R30}



\begin{abstract}
In this article we provide an algebraic characterization of those groups of $PSL(3,\Bbb{C})$ whose limit set in the  Kulkarni sense has, exactly, four lines in general position. Also we show that, for this class of groups,  the equicontinuity set of the group is the largest open set where the group  acts discontinuously and agrees  with the discontinuity set of the group. 	
\end{abstract}

\maketitle

\section*{ Introduction}

In a recent article, see \cite{bcn},  we have proven that for a complex Kleinian group without proper invariant  subspaces and  "enough" lines in the Kulkarni's limit set, it holds that its discontinuity  set  agrees with the equicontinuity  set of the group, is the largest open set where the group acts discontinuously and is a holomorphy domain. Such result enable  us to understand the relationship, in the two dimensional case and  for a "large class" of groups, bettwen the different notions of limit sets which are usually studied as well as its geometry, see \cite{csn}. This article is  a step to extent the results in \cite{bcn} to the case when the groups has invariant subspaces and "enough" lines in the limit set. More precisely we prove:\\
 
\begin{theorem} \label{t:main}
 Let $\Gamma\subset PSL(3,\C)$ be a discrete group. The limit set, in the Kulkarni sense, of $\Gamma$ has exactly four lines in general position   if and only if   $\Gamma$ has a hyperbolic toral group, see section \ref{s:toral}, whose index is at most $8$.
\end{theorem}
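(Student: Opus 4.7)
The theorem is a biconditional, so I handle the two directions in turn.

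For $(\Leftarrow)$, suppose $\Gamma$ contains a hyperbolic toral subgroup $\Gamma_0$ with $[\Gamma:\Gamma_0]\le 8$. I would first invoke the structural analysis of hyperbolic toral groups developed in Section~\ref{s:toral} to deduce that the Kulkarni limit set of $\Gamma_0$ is the union of exactly four complex projective lines in general position. To propagate the conclusion from $\Gamma_0$ to $\Gamma$, I would use the standard fact that passing to a finite-index overgroup does not alter the Kulkarni limit set: from any divergent sequence in $\Gamma$ one extracts an infinite subsequence lying in a single $\Gamma_0$-coset, so cluster points of $\Gamma$-orbits coincide with cluster points of $\Gamma_0$-orbits.

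For $(\Rightarrow)$, suppose $L_{Kul}(\Gamma)$ contains exactly four lines in general position $\ell_1,\ldots,\ell_4$, meeting at the six points $p_{ij}=\ell_i\cap\ell_j$. Since the four lines are a canonical feature of the invariant set $L_{Kul}(\Gamma)$, the group $\Gamma$ permutes them, producing a homomorphism $\phi:\Gamma\to S_4$. The task is then to construct a hyperbolic toral subgroup $\Gamma_0\le \Gamma$ of index at most $8$. I would exploit the interpretation of the $p_{ij}$ as attracting and repelling fixed points of loxodromic elements of $\Gamma$ to exhibit a common eigenbasis for a large subgroup: the elements sharing this basis form a simultaneously diagonalizable subgroup $\Gamma_0$, realized inside a maximal torus of $PSL(3,\C)$. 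The hypothesis that $L_{Kul}(\Gamma)$ contains \emph{exactly} four lines forbids elliptic or parabolic behaviour in $\Gamma_0$ (any such element would enlarge the limit set by producing additional invariant lines), so every nontrivial element of $\Gamma_0$ is loxodromic, i.e., $\Gamma_0$ is hyperbolic toral.

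For the index bound, I would analyse the induced action of $\Gamma/\Gamma_0$ on the additional invariants canonically attached to $\Gamma_0$, namely its triangle of fixed points in $\P$ together with the distinguished fourth line of the configuration. The stabiliser of this combined structure inside $S_4$ has order at most $8$; indeed, it is at most the dihedral group $D_4$, viewed as the symmetry group of the quadrilateral of four lines equipped with a distinguished pair of opposite vertices. This yields the bound $[\Gamma:\Gamma_0]\le 8$.

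The main obstacle is the construction of $\Gamma_0$ in $(\Rightarrow)$. Taking $\Gamma_0=\ker\phi$ would be too naive: any element preserving every $\ell_i$ set-wise must fix each $p_{ij}$, and one verifies that four among these (for instance $p_{13},p_{23},p_{14},p_{24}$) lie in general position, so such an element is the identity. Hence $\Gamma_0$ cannot be read off from the line-permutation data alone and must be extracted from finer dynamical information, in particular the eigenvalue moduli and the attractor/repeller structure of individual elements. The bulk of the technical work lies in simultaneously verifying that this dynamically-defined $\Gamma_0$ is abelian, infinite, entirely loxodromic, of finite index in $\Gamma$, and that the index satisfies the claimed bound.
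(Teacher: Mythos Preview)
Your proposal rests on a misreading of the paper's definition of \emph{hyperbolic toral group}. In Section~\ref{s:toral} these are \emph{not} subgroups of a maximal torus of $PSL(3,\C)$: they are (conjugates of) groups of the form
\[
\left\{\begin{pmatrix} A^k & b\\ 0 & 1\end{pmatrix}: k\in\Bbb Z,\ b\in\Bbb Z^2\right\}
\]
with $A\in SL(2,\Bbb Z)$ a hyperbolic toral automorphism (or the two-parameter variant of Proposition~\ref{p:tor2}). Such a group is non-abelian, is a semidirect product $\Bbb Z^2\rtimes\Bbb Z$ (or $\Bbb Z^2\rtimes\Bbb Z^2$), and contains a rank-two lattice of \emph{unipotent} elements. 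So your programme of building $\Gamma_0$ as ``simultaneously diagonalizable, abelian, entirely loxodromic'' targets the wrong object; in particular the sentence ``exactly four lines forbids elliptic or parabolic behaviour'' is false and in fact the opposite is proved in Lemma~\ref{l:loxlox}: the group necessarily contains an element that is parabolic on both factors, and the subgroup $Par(\Gamma_0)\cong\Bbb Z\times\Bbb Z$ of such elements is the core of the construction.

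There is a second, independent gap. The condition $LiG(\Omega(\Gamma))=4$ does \emph{not} mean that $\Lambda(\Gamma)$ consists of four specific lines; it means the maximal size of a configuration of lines in general position inside $\Lambda(\Gamma)$ is four. In the toral examples $\Lambda(\Gamma)$ is a union of two full real pencils of lines through two vertices $e_1,e_2$, so there is no canonical four-tuple for $\Gamma$ to permute and your homomorphism $\phi:\Gamma\to S_4$ is not defined. (Your own computation that $\ker\phi$ would be trivial should have been a warning: it would force $\Gamma$ to be finite.) What \emph{is} canonical is the pair of vertices. The paper's route is: pass to the index-$\le 2$ subgroup $\Gamma_0$ fixing both vertices; show each projection $\Pi_j(\Gamma_0)$ contains loxodromics and parabolics and has a circle as non-equicontinuity set; conjugate so that $\Gamma_0$ preserves $\Bbb P^2_{\Bbb R}$; pass to the index-$\le 4$ subgroup $\check\Gamma$ stabilising each quadrant $\Bbb H^{\epsilon_1}\times\Bbb H^{\epsilon_2}$ (whence the bound $2\cdot 4=8$); and finally show, via the morphism $eLat$ and the lattice $Lat(\widetilde{Par(\Gamma_0)})$, that $\check\Gamma$ is conjugate to one of the model groups of Propositions~\ref{p:tor1}--\ref{p:tor2}.
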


\begin{theorem}
Let $\Gamma\subset PSL(3,\Bbb{C})$ be a toral group. Thus the discontinuity set in the  Kulkarni sense  agrees with the equicontinuity region and is given by:
\[
\Omega(\Gamma)=\bigcup_{\epsilon_1,\epsilon_2=\pm 1}\Bbb{H}^{\epsilon_1}\times \Bbb{H}^{\epsilon_2}, 
\]
where $\Bbb{H}^{+1}$ and $\Bbb{H}^{-1}$ are the    upper half  and lower half plane.  
Moreover $\Omega(\Gamma)$ is the largest open set on which $\Gamma$ acts properly discontinuously.
\end{theorem}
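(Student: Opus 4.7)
My plan is first to fix coordinates using the structure of a toral group given in Section~\ref{s:toral}. The expectation is to find a projective chart in which every $\gamma\in\Gamma$ acts in the affine chart $z_0=1$ by a real-affine map $(z,w)\mapsto(a_1z+b_1,\,a_2w+b_2)$ (possibly composed with the swap of factors), with $a_1,a_2\in\mathbb{R}^*$ and $b_1,b_2\in\mathbb{R}$. Such a $\Gamma$ manifestly preserves the two real hyperplanes $\{\mathrm{Im}\,z=0\}$ and $\{\mathrm{Im}\,w=0\}$, hence preserves $\Omega:=(\C\setminus\mathbb{R})^2=\bigcup_{\epsilon_1,\epsilon_2}\H^{\epsilon_1}\times\H^{\epsilon_2}$ and permutes its four bidisk components.

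Next, passing to the finite-index subgroup $\Gamma_0$ that fixes each component (index at most $8$ by Theorem~\ref{t:main}), I would use a product-of-Fuchsian-groups principle: the two factor projections of $\Gamma_0$ into $\mathrm{Aff}(\mathbb{R})\subset PSL(2,\mathbb{R})$ are discrete, so $\Gamma_0$ acts properly discontinuously on $\H^+\times\H^+$ and hence $\Gamma$ does on $\Omega$, proving $\Omega\subseteq\Omega_{\mathrm{Kul}}(\Gamma)$. The Kobayashi metric on each bidisk, being $\Gamma_0$-invariant, yields normality of the restrictions of $\Gamma$ on compact subsets of $\Omega$, giving $\Omega\subseteq\mathrm{Eq}(\Gamma)$.

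For the reverse inclusions and for maximality, pick $p\in\partial\Omega$, say with $\mathrm{Im}\,z(p)=0$. The key step is to construct a sequence $\gamma_n\in\Gamma_0$ whose action on a chosen base point of $\H^+\times\H^+$ tends to $p$. This simultaneously places $p$ in the Kulkarni limit set, breaks equicontinuity of $\Gamma$ at $p$, and rules out any $\Gamma$-invariant enlargement of $\Omega$ on which $\Gamma$ acts properly discontinuously; thus all three desired equalities follow at once. The main difficulty is this last construction: one must show, using the toral hypothesis, that the first-factor projection of $\Gamma_0$ has limit set equal to $\mathbb{R}\cup\{\infty\}$, so that every real-axis point of $\partial\Omega$ is genuinely attained by orbit accumulation. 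This is where the precise definition of toral from Section~\ref{s:toral} is essential.
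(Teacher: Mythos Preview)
Your outline contains two genuine gaps.

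\medskip
\noindent\textbf{The factor projections are not discrete.} After diagonalizing the $SL(2,\mathbb{Z})$--block by $\hat T\in SL(2,\mathbb{R})$, the translation part of $\langle k{:}l{:}b{:}\nu\rangle$ becomes $\hat T\bigl(b+\phi(\nu,l)\bigr)$. A hyperbolic toral automorphism has irrational eigenvalues, so its eigenvectors have irrational slope, and hence the first coordinate of $\hat T(\mathbb{Z}^2)$ is already a dense subgroup of $\mathbb{R}$. Thus $\Pi_1(\Gamma)$ contains translations $z\mapsto z+c$ for $c$ ranging over a dense set and is non-discrete; your product-of-Fuchsian principle cannot be invoked. (The conclusion $\Omega\subset\Omega(\Gamma)$ is nonetheless correct, since $\Gamma$ sits discretely in the closed Lie group of split real-affine maps, which acts properly on each bidisk; the paper gets it instead from Theorem~3.5 of \cite{bcn} once $LiG(\Omega)=4$ is known.) Note also that citing Theorem~\ref{t:main} here is circular: that theorem is proved in Section~\ref{s:4lines} \emph{using} the present result, and in any case a toral group already has, by Definition, the explicit matrix form you want.

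\medskip
\noindent\textbf{Orbits of interior points miss most of $\partial\Omega$.} Because the $2\times 2$ block lies in $SL(2,\mathbb{Z})$, in the diagonalized chart every $\gamma\in\Gamma$ acts as $(z,w)\mapsto(az+b,\,a^{-1}w+d)$ with $a>0$, so
\[
\mathrm{Im}\bigl(\gamma z_0\bigr)\cdot\mathrm{Im}\bigl(\gamma w_0\bigr)=\mathrm{Im}(z_0)\cdot\mathrm{Im}(w_0)
\]
is constant along every orbit. A boundary point $p=(x,y)$ with $x\in\mathbb{R}$ and $y\in\mathbb{H}^{+}$ has this product equal to $0$, hence is \emph{never} a cluster point of $\gamma_n(z_0,w_0)$ for $(z_0,w_0)\in\Omega$; such $p$ belong to $L_2(\Gamma)$, not to $L_1(\Gamma)$, and your proposed sequence cannot be built. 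The paper's route is different: it first places the coordinate lines $\ell_1=\overleftrightarrow{e_2,e_3}$ and $\ell_2=\overleftrightarrow{e_1,e_3}$ inside $\Lambda(\Gamma)$ via the diagonal loxodromic coming from $A$, and then shows (using the non-discrete projections $\Pi_j$ and Lemma~\ref{l:disc}) that the $\Gamma$--orbit closure of these \emph{lines} is exactly $\bigcup_{j}\bigcup_{p\in\mathbb{R}(\ell_j)}\overleftrightarrow{e_j,p}=\mathbb{P}^2_{\mathbb{C}}\setminus\Omega$. Since $\Lambda$ is closed and $\Gamma$--invariant, this yields $\mathbb{P}^2_{\mathbb{C}}\setminus\Omega\subset\Lambda(\Gamma)$, i.e.\ $\Omega(\Gamma)\subset\Omega$. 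This sweeping by complex lines---precisely the $L_2$ mechanism in Kulkarni's definition---is the idea your plan is missing, and it is also what underlies the maximality claim.
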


This article is  organized as follows: in section \ref{s:prel} we introduce some terms  and  notations which will be used along the text. In section  \ref{s:toral} we construct some examples of groups with four lines in general position.  Finally in section \ref{s:4lines}, we present the proof of theorem \ref{t:main}.\\

The authors are grateful to Professor Jos\'e Seade for stimulating conversations. Part of this research was done while the authors were visiting the
IMATE-UNAM campus Cuernavaca and the FMAT of the UADY. Also,
during this time, the second author was in a postdoctoral year at IMPA,
and they are grateful to these institutions and its people, for their support
and hospitality.

\section{Preliminaries and Notations} \label{s:prel}

\subsection{Projective Geometry}
We recall that the complex projective plane
$\mathbb{P}^2_{\mathbb{C}}$ is $$\Bbb{P}^2_\Bbb{C}:=(\mathbb{C}^{3}\setminus \{0\})/\mathbb{C}^*
,$$
 where $\mathbb{C}^*$ acts on $\mathbb{C}^3\setminus\{0\}$ by the usual scalar
 multiplication. This  is   a  compact connected  complex $2$-dimensional manifold.
Let $[\mbox{ }]:\mathbb{C}^{3}\setminus\{0\}\rightarrow
\mathbb{P}^{2}_{\mathbb{C}}$ be   the quotient map. If
$\beta=\{e_1,e_2,e_3\}$ is the standard basis of $\mathbb{C}^3$, we
will write $[e_j]=e_j$ and if $w=(w_1,w_2,w_3)\in
\mathbb{C}^3\setminus\{0\}$ then we will  write   $[w]=[w_1:w_2:w_3]$.
Also, $\ell\subset \mathbb{P}^2_{\mathbb{C}}$ is said to be a
complex line if $[\ell]^{-1}\cup \{0\}$ is a complex linear
subspace of dimension $2$. Given  $p,q\in
\mathbb{P}^2_{\mathbb{C}}$ distinct points,     there is a unique
complex line passing through  $p$ and $q$, such line will be
denoted by $\overleftrightarrow{p,q}$.\\

Consider the action of $\mathbb{Z}_{3}$ (viewed as the cubic roots of
the unity) on  $SL(3,\mathbb{C})$ given by the usual scalar
multiplication, then
$$PSL(3,\mathbb{C})=SL(3,\mathbb{C})/\mathbb{Z}_{3}$$ is a Lie group
whose elements are called projective transformations.  Let
$[[\mbox{  }]]:SL(3,\mathbb{C})\rightarrow PSL(3,\mathbb{C})$ be   the
quotient map,   $\gamma\in PSL(3,\mathbb{C})$ and  $\widetilde\gamma\in
GL(3,\mathbb{C})$, we will say that  $\tilde\gamma$  is a  lift of
$\gamma$ if there is a cubic root $\tau$ of  $Det(\gamma)$ such that   $[[\tau \widetilde\gamma]]=\gamma$, also, we will use the notation $(\gamma_{ij})$ to denote elements  in $SL(3,\Bbb{C})$. One can show that
$ PSL(3,\mathbb{C})$ is a Lie group  that acts  transitively,
effectively and by biholomorphisms  on $\mathbb{P}^2_{\mathbb{C}}$
by $[[\gamma]]([w])=[\gamma(w)]$, where $w\in
\mathbb{C}^3\setminus\{0\}$ and    $\gamma\in SL_3(\mathbb{C})$.

\subsection{Complex Kleinian Groups}
 Let $\Gamma\subset   PSL(3,\mathbb{C})$ be a subgroup. We  define
 (following Kulkarni, see  \cite{kulkarni}): the set 
 $L_0(\Gamma)$  as the closure  of  the points in
$\mathbb{P}^2_{\mathbb{C}}$ with infinite isotropy group. The set $L_1(\Gamma)$ as the closure of the set  of cluster points  of
$\Gamma z$  where  $z$ runs  over  $\mathbb{P}^2_{\mathbb{C}}\setminus
L_0(\Gamma)$. Recall that $q$ is a cluster point  for  $\Gamma K$,
where $K\subset \mathbb{P}^2_{\mathbb{C}}$ is a non-empty set, if there is a sequence
$(k_m)_{m\in\mathbb{N}}\subset K$ and a sequence of distinct elements
$(\gamma_m)_{m\in\mathbb{N}}\subset \Gamma$ such that
$\gamma_m(k_m)\xymatrix{ \ar[r]_{m \rightarrow  \infty}&} q$. The set   $L_2(\Gamma)$ as  the closure of cluster  points of $\Gamma
K$  where $K$ runs  over all  the compact sets in
$\mathbb{P}^2_{\mathbb{C}}\setminus (L_0(\Gamma) \cup L_1(\Gamma))$. The  \textit{Limit Set in the sense of Kulkarni} for $\Gamma$  is
defined as:  $$\Lambda (\Gamma) = L_0(\Gamma) \cup
L_1(\Gamma) \cup L_2(\Gamma).$$  The \textit{Discontinuity
Region in the sense of Kulkarni} of $\Gamma$ is defined as:
$$\Omega(\Gamma) = \mathbb{P}^2_{\mathbb{C}}\setminus
\Lambda(\Gamma).$$
 We will say  that $\Gamma$ is a \textit{Complex  Kleinian Group}
 if $\Omega(\Gamma)\neq \emptyset$.

\begin{lemma} \label{l:control} ( See \cite{cano}) Let   $ \Gamma\subset PSL_3(\mathbb{C})$ be a subgroup, $p\in \mathbb{P}^2_{\mathbb{C}}$
such that  $\Gamma p=p$ and $\ell$ a complex line not containing $p$.
Define $\Pi=\Pi_{p,\ell}:\Gamma\longrightarrow  Bihol(\ell) $
given by $\Pi(g)(x)=\pi(g(x))$ where
$\pi=\pi_{p,\ell}:\mathbb{P}^2_{\mathbb{C}}- \{p\}\longrightarrow
\ell$ is given by $\pi(x)=\overleftrightarrow{x,p}\cap \ell$, then:
\begin{enumerate}
\item \label{i:con1} $\pi$ is a holomorphic  function. \item
\label{i:con2} $\Pi$   is a group morphism. \item \label{i:con3}
If   $ Ker(\Pi)$ is finite and $\Pi(\Gamma)$ is discrete, then
$\Gamma$ acts   discontinuously on $\Omega=(\bigcup_{z\in
\Omega (\Pi (\Gamma))}\overleftrightarrow{z,p})- \{p\}$. Here
$\Omega(\Pi(\Gamma))$ denotes the discontinuity set of
$\Pi(\Gamma)$.
 \item \label{i:con4} If $\Gamma$ is discrete, $\Pi(\Gamma)$
is non-discrete and  $\ell$ is invariant, then $\Gamma$ acts
discontinuously on $\Omega= \bigcup_{z\in
Eq(\Pi(\Gamma))} \overleftrightarrow{z,p}-(\ell\cup\{p\} )$.
\end{enumerate}

\end{lemma}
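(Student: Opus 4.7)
I would choose homogeneous coordinates with $p=[0:0:1]$ and $\ell=\{z_3=0\}$. Then $\pi([x_1:x_2:x_3])=[x_1:x_2:0]$ is the restriction to $\mathbb{P}^{2}_{\mathbb{C}}\setminus\{p\}$ of a linear projection, which gives (i). Since every $\gamma\in\Gamma$ fixes $p$, a lift $\tilde\gamma\in SL(3,\mathbb{C})$ must be block--triangular of the form
\[
\tilde\gamma=\begin{pmatrix} A_\gamma & 0 \\ v_\gamma^{\top} & g_\gamma \end{pmatrix},\qquad (\det A_\gamma)\,g_\gamma=1,
\]
and $\Pi(\gamma)$ is induced on $\ell$ by $A_\gamma$. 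The multiplicativity $\Pi(gh)=\Pi(g)\Pi(h)$ follows either from $A_{gh}=A_gA_h$ (top--left block of $\tilde g\tilde h$) or from the geometric observation that $g$ sends $\overleftrightarrow{h(x),p}$ to $\overleftrightarrow{g(h(x)),p}$, so $\pi(g(h(x)))$ and $\pi(g(\pi(h(x))))$ are both the intersection of that image line with $\ell$; this gives (ii).

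\textbf{Part (iii).} Let $K\subset\Omega$ be compact. By construction $\Omega=\pi^{-1}(\Omega(\Pi(\Gamma)))\setminus\{p\}$, so $\pi(K)$ is a compact subset of $\Omega(\Pi(\Gamma))$. The image $\Pi(\Gamma)$ is discrete in $Bihol(\ell)\cong PSL(2,\mathbb{C})$, hence acts properly discontinuously on $\Omega(\Pi(\Gamma))$ (classical Kleinian--group fact), so only finitely many $\bar\gamma\in\Pi(\Gamma)$ satisfy $\bar\gamma(\pi(K))\cap\pi(K)\neq\emptyset$. Any $\gamma\in\Gamma$ with $\gamma(K)\cap K\neq\emptyset$ projects to such a $\bar\gamma=\Pi(\gamma)$, and since $Ker(\Pi)$ is finite by hypothesis, there are only finitely many such $\gamma$, proving (iii).

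\textbf{Part (iv), the main obstacle.} Now $\ell$ is $\Gamma$--invariant, so $\Pi(\gamma)=\gamma|_\ell$; for $K\subset\Omega$ compact, $\pi(K)$ is a compact subset of $Eq(\Pi(\Gamma))$ and $K$ is bounded away from both $\ell$ and $p$. Suppose, for contradiction, there are distinct $\gamma_n\in\Gamma$ and $k_n,k_n'\in K$ with $\gamma_n(k_n)=k_n'$; passing to a subsequence, $k_n\to k_\ast$ and $k_n'\to k_{\ast\ast}$ in $K$. Equicontinuity of $\{\Pi(\gamma_n)|_{\pi(K)}\}$ plus Arzel\`a--Ascoli on $\mathbb{P}^{1}_{\mathbb{C}}$ gives a further subsequence along which $\Pi(\gamma_n)\to h$ uniformly on $\pi(K)$, with $h(\pi(k_\ast))=\pi(k_{\ast\ast})$; in the block form above this forces $A_n$ to be bounded after a suitable scaling. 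The plan is then to rescale $\tilde\gamma_n$ by a scalar $\lambda_n$ so that $\lambda_n\tilde\gamma_n\to M\in M_{3\times 3}(\mathbb{C})\setminus\{0\}$; discreteness of $\Gamma$ together with the distinctness of the $\gamma_n$ forces $\det M=0$. The crux is to show that this forced rank drop pushes $\gamma_n(k_n)$ into $\ell\cup\{p\}$: either the denominator $e_n x_n+f_n y_n+g_n$ dominates the numerator (sending $\gamma_n(k_n)\to p$) or the rescaled top block collapses onto a direction in $\ell$ (sending $\gamma_n(k_n)\to\ell$), each contradicting $k_{\ast\ast}\in K$. Coordinating the behaviour of the bottom row $(v_n^{\top},g_n)$, which $\Pi$ sees nothing of, with the $\Pi$--level equicontinuity of $A_n$ is the delicate part of the argument; this is exactly where the hypothesis that $K$ avoids both $\ell$ and $p$ must be invoked.
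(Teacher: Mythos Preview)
The paper does not prove this lemma at all: it is quoted with the attribution ``(See \cite{cano})'' and then used as an imported tool. There is thus no in-paper argument against which to compare your proposal.

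That said, a few remarks on your attempt. Parts (i)--(iii) are correct and entirely standard. In part (iv) you have overlooked a simplification that removes most of the ``delicate coordination'' you flag: once $\ell=\{z_3=0\}$ is assumed $\Gamma$-invariant, the bottom-left row $v_\gamma^{\top}$ in your block form is forced to vanish (preserving $\mathrm{span}(e_1,e_2)$ means the third row is $(0,0,g_\gamma)$), so every lift is block-diagonal,
\[
\tilde\gamma_n=\begin{pmatrix} A_n & 0\\ 0 & g_n\end{pmatrix}.
\]
With this, after passing to a subsequence on which $[A_n]\to h$ uniformly on a neighbourhood of $\pi(K)$ in $Eq(\Pi(\Gamma))$, the dichotomy is clean: either $h\in PSL(2,\mathbb{C})$, in which case one normalises so that $A_n\to A_\infty\in GL(2,\mathbb{C})$ and then $g_n$ must diverge (otherwise $\tilde\gamma_n$ converges in $GL(3,\mathbb{C})$, contradicting discreteness plus distinctness), forcing $\gamma_n(k_n)\to p\notin K$; or $h$ is constant, which already implies $[A_n]$ degenerates on $\pi(K)$. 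Your sketch stops short of handling this second branch and of the rank-two limit $M=\mathrm{diag}(A_\infty,g_\infty)$ with $A_\infty$ singular; that case does require an extra idea (for instance, running the same argument on $\gamma_n^{-1}$, which is equally equicontinuous on $\pi(K)$). So (iv) as written is a plan rather than a proof, though the missing $v_\gamma=0$ observation makes the remaining gap narrower than you suggest.
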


\begin{lemma} \label{l:disc}
 Let $\Sigma\subset PSL(2, \C)$ be a non discrete group, then:
\begin{enumerate}
 \item The set  $\P^1\setminus Eq(\Sigma)$ is either, empty,  one points, two points, a circle or $\P^1$.
\item  If $\mathcal{C}$ is an invariant closet set which contains at least 2 points, then $\P^1\setminus Eq(\Sigma)\subset\overline{\Sigma \mathcal{C}}$.
\item  The set $\P^1\setminus Eq(\Sigma)$ is the closure of the loxodromic fixed points.
 \end{enumerate}
\end{lemma}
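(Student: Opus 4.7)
The strategy is to pass to the closure $G=\overline{\Sigma}$, a closed Lie subgroup of $PSL(2,\Bbb{C})$, and to exploit the classification of its identity component $G^{0}$. One first checks that $Eq(\Sigma)=Eq(G)$: every element of $G$ is a limit of elements of $\Sigma$ in $PSL(2,\Bbb{C})$, so any uniform modulus-of-continuity estimate at a point $z_{0}$ transfers between $\Sigma$ and $G$; the reverse inclusion is free because $\Sigma\subset G$. Since $\Sigma$ is non-discrete, $G$ has positive real dimension and so $G^{0}$ is a connected positive-dimensional closed Lie subgroup of $PSL(2,\Bbb{C})$.

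The next step invokes the classification of connected closed subgroups of $PSL(2,\Bbb{C})$ up to conjugation. The possibilities for $G^{0}$ are: a one-parameter subgroup of elliptic, parabolic, hyperbolic, or loxodromic type; the translation group $\Bbb{C}$; the diagonal group $\Bbb{C}^{*}$; the affine group $\mathrm{Aff}(\Bbb{C})$; $PSL(2,\Bbb{R})$; $PSU(2)$; or $PSL(2,\Bbb{C})$ itself. In each case $\P^{1}\setminus Eq(G^{0})$ is computed directly and falls into one of the five listed types: it is empty for $PSU(2)$ and for one-parameter elliptic subgroups (both precompact on orbits); a single fixed point for the parabolic and translation cases; two fixed points for the hyperbolic, loxodromic one-parameter and $\Bbb{C}^{*}$ cases; the invariant circle for $PSL(2,\Bbb{R})$; and all of $\P^{1}$ for $\mathrm{Aff}(\Bbb{C})$ and $PSL(2,\Bbb{C})$. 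Since any $g\in G$ normalizes $G^{0}$ and therefore permutes its invariant sets, $\P^{1}\setminus Eq(G)$ has the same form. This settles (i).

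Parts (ii) and (iii) are then verified case by case on the same list. For (ii) one observes that in each case $\P^{1}\setminus Eq(G^{0})$ is the smallest closed $G^{0}$-invariant subset of $\P^{1}$ of cardinality at least two: any invariant closed $\mathcal{C}$ with $|\mathcal{C}|\geq 2$ must contain a point outside the fixed-point locus, and in each case a direct inspection shows that the orbit closure of such a point exhausts $\P^{1}\setminus Eq(G^{0})$, forcing $\P^{1}\setminus Eq(\Sigma)\subset\overline{\Sigma \mathcal{C}}$. For (iii), loxodromic fixed points always lie in $\P^{1}\setminus Eq$ (they are attractors or repellers of an element of infinite order), so the content is the reverse inclusion: in each case where $G$ contains loxodromic elements one identifies them explicitly and checks density of their fixed points. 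The fixed points $b/(1-a)$ of loxodromic elements of $\mathrm{Aff}(\Bbb{C})$ cover $\Bbb{C}$, those of $PSL(2,\Bbb{R})$ are dense on the invariant circle, and those of $PSL(2,\Bbb{C})$ fill $\P^{1}$. The main obstacle is precisely this bookkeeping in (iii), ensuring in particular that in the purely parabolic or purely elliptic situations the enveloping group $G$, as opposed to $G^{0}$, supplies enough loxodromic elements for the density statement to hold.
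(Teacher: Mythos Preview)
The paper does not prove this lemma: it is stated in the preliminaries (Section~\ref{s:prel}) with no proof and no explicit citation, and is subsequently used only as a black box. There is therefore no argument in the paper to compare yours against.

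Your strategy---passing to the Lie closure $G=\overline{\Sigma}$, observing $Eq(\Sigma)=Eq(G)$, and running through the classification of positive-dimensional connected closed subgroups of $PSL(2,\Bbb{C})$---is the natural one, and your outline for parts (i) and (ii) is sound.

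For part (iii), however, the ``main obstacle'' you flag is not mere bookkeeping: it is a genuine gap, and it cannot be closed because the statement is false as written. Take $\Sigma=\{\,z\mapsto z+a+b\sqrt{2}:a,b\in\Bbb{Z}\,\}$. This is a non-discrete subgroup whose closure is the real one-parameter translation group; every nontrivial element of $\Sigma$ (indeed of $G$) is parabolic, so the set of loxodromic fixed points is empty, whereas $\P^{1}\setminus Eq(\Sigma)=\{\infty\}$ is a single point. Thus no amount of passing from $G^{0}$ to $G$ will ``supply enough loxodromic elements'' here. The paper only invokes part (iii) in situations where loxodromic elements have already been exhibited (see the proof of Proposition~\ref{p:tor2}), so the intended assertion is presumably conditional on $\Sigma$ containing a loxodromic element; under that extra hypothesis your case analysis does go through, since the purely parabolic and purely elliptic closures are then excluded.
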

\subsection{Counting Lines}
\begin{definition}
Let $\Omega\subset \P^2$  be a non-empty open set. Let us define:
\begin{enumerate}
\item The lines in general position outside $\Omega$ as:
\begin{small}
\[
LG(\Omega)=\left \{\mathcal{L} \subset Gr_1(\P^2)\vert \textrm{The lines in } \mathcal{L} \textrm{ are in general position } \&\,  \bigcup\mathcal{L}\subset  \P^2\setminus \Omega \right \}; 
\]
\end{small}
\item The number of lines in general position outside $\Omega$ as:
\[
LiG(\Omega)=max(\{card(\mathcal{L}): \mathcal{L}\in  LG(\Omega)\}), 
\]
where $card(C)$ denotes the number of elements contained in $C$.
\item Given $\mathcal{L}\in 
LG(\Omega)$ and $v\in \bigcup \mathcal{L}$, we will say that  $v$ is a vertex for  $\mathcal{L}$ if there are $\ell_1,\ell_2\in \mathcal{L}$ distinct lines and   an infinite set $\mathcal{C}\subset  Gr_1(\P^2)$  such that $\ell_1\cap\ell_2 \cap(\bigcap \mathcal{C})=\{v\}$ and 
$\bigcup\mathcal{C}\subset \P^2\setminus \Omega$.
\end{enumerate}   
\end{definition}

\begin{proposition} Let $\Gamma\subset    PSL(3\Bbb{C})$ be a complex Kleinian group. If $LiG(\Omega(\Gamma))=4$, then for each $\mathcal{L}\in GL(\Omega)$ with $card(\mathcal{L})=4$, it falls out that:
\begin{enumerate}
\item \label{4lin5} The array of lines 
$ \mathcal{L}$ contains exactly two vertexes;
\item \label{4lin2} For every  vertex $v$ of $\mathcal{L})$, it follows that $Isot(v,\Gamma)$ is a subgroup of $\Gamma$ with finite index.
\end{enumerate}
 
\end{proposition}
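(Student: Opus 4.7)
The plan splits into an upper bound (which also establishes (ii)) and a lower bound on the number of vertexes.

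\textbf{Upper bound and (ii).} Label $\mathcal{L}=\{\ell_1,\dots,\ell_4\}$ with six intersection points $p_{ij}=\ell_i\cap\ell_j$, partitioned into three ``opposite'' pairs $\{p_{12},p_{34}\},\{p_{13},p_{24}\},\{p_{14},p_{23}\}$. Suppose two vertexes $v_a,v_b$ shared a line of $\mathcal{L}$, say $v_a=p_{12}$ and $v_b=p_{13}$. Using their infinite $\Lambda$-pencils, I would pick a line $m_a\subset\Lambda$ through $v_a$ with $m_a\ne\ell_1,\ell_2$ and a line $m_b\subset\Lambda$ through $v_b$ with $m_b\ne\ell_1,\ell_3$; a generic choice avoiding the finitely many lines in each pencil that pass through another $p_{ij}$, together with the one $m_b$ making $m_a\cap m_b$ lie on $\ell_4$, makes $\{m_a,m_b,\ell_2,\ell_3,\ell_4\}$ into five lines in general position contained in $\Lambda$, contradicting $LiG(\Omega)=4$. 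Hence vertexes must be pairwise opposite, and since the three opposite pairs are disjoint, at most two exist. A parallel five-line construction shows that any ``rich'' point (one with infinite $\Lambda$-pencil) must be an intersection $p_{ij}$---hence a vertex---so the rich-point set equals the vertex set, has cardinality at most $2$, and is $\Gamma$-invariant (as $\Gamma$ preserves $\Lambda$). This gives part (ii) with index $\le 2$.

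\textbf{Lower bound: zero-vertex case.} If no $p_{ij}$ is a vertex, each has only finitely many $\Lambda$-lines through it, and every $\Lambda$-line outside $\mathcal{L}$ must pass through some $p_{ij}$ (else we would have five lines in general position), so $\Lambda$ contains only finitely many lines. Then $\Gamma$ permutes this finite set, and a finite-index subgroup $\Gamma_0$ stabilizes every $\ell_i$ setwise; on each $\ell_i\cong\mathbb{P}^1$ this subgroup fixes the three points $\{p_{ij}\}_{j\ne i}$, hence acts as the identity. Therefore $\Gamma_0$ fixes the four points $p_{13},p_{23},p_{14},p_{24}$, which are in general position, and is trivial, forcing $\Gamma$ finite---contradicting that $\Lambda$ contains lines.

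\textbf{Lower bound: one-vertex case.} Assume $v_1=p_{12}$ is the unique vertex. By (ii) already shown, $\Gamma$ fixes $v_1$; in particular no $\gamma\ell_3$ passes through $v_1$, since $\gamma^{-1}v_1=v_1\notin\ell_3$. If both $\Gamma\ell_3$ and $\Gamma\ell_4$ were finite, the intersection of their stabilizers would be finite-index and act as the identity on each of $\ell_3,\ell_4$ (three fixed points per line), hence fix four points in general position and be trivial, forcing $\Gamma$ finite, contradiction. So without loss of generality $\Gamma\ell_3$ is infinite; but each line in $\Gamma\ell_3$ lies in $\mathcal{L}$ or passes through some $p_{ab}$ with $\{a,b\}\ne\{1,2\}$, and each such $p_{ab}$ is not a vertex so carries only finitely many $\Lambda$-lines. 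This forces $|\Gamma\ell_3|<\infty$, a contradiction. Combined with the upper bound, $\mathcal{L}$ has exactly two vertexes.

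The main obstacle is the generic-choice steps: one must carefully enumerate the finitely many concurrency-forcing lines in each relevant pencil and check that infinitely many admissible lines remain, so that five lines in general position can really be produced. Everything else reduces to orbit counting together with two classical facts---a projectivity of $\mathbb{P}^1$ fixing three points is the identity, and one of $\mathbb{P}^2$ fixing four points in general position is the identity.
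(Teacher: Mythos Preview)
The paper does not supply a proof of this proposition; it is stated at the end of Section~\ref{s:prel} and then used as the standing hypothesis for Section~\ref{s:4lines}. So there is nothing in the paper to compare against, and your argument has to stand on its own.

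Your upper bound (at most two vertexes, necessarily opposite) and the derivation of part~(ii) via the $\Gamma$-invariant ``rich point'' set are correct, and the genericity bookkeeping you flag is routine. The zero-vertex case is also fine: once all $\ell_i$ are stabilized by a finite-index subgroup, the three points $p_{ij}$ on each $\ell_i$ really are fixed, and the rest follows.

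The one-vertex case, however, has a genuine gap. You assert that if $\Gamma\ell_3$ and $\Gamma\ell_4$ are both finite then a finite-index subgroup $\Gamma_0$ ``acts as the identity on each of $\ell_3,\ell_4$ (three fixed points per line)''. But on $\ell_3$ only $p_{34}=\ell_3\cap\ell_4$ is forced to be fixed: the points $p_{13},p_{23}$ are fixed only if $\ell_1,\ell_2$ are stabilized, and you have not arranged that---indeed $\Gamma\ell_1,\Gamma\ell_2$ may be infinite, since they lie in the infinite $\Lambda$-pencil through the vertex $p_{12}$. Concretely, with $p_{12}=[0:1:1]$, $\ell_3=\{z=0\}$, $\ell_4=\{y=0\}$, the two-parameter group
\[
[x:y:z]\longmapsto [\,ax+b(y-z):y:z\,],\qquad a\in\Bbb C^{*},\ b\in\Bbb C,
\]
fixes $p_{12}$, stabilizes $\ell_3$ and $\ell_4$, yet its only common fixed point on $\ell_3$ is $p_{34}=[1:0:0]$. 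So this step does not follow from the incidence data alone. Since (as you correctly show in the next sentence) $\Gamma\ell_3$ and $\Gamma\ell_4$ are \emph{always} finite in the one-vertex situation, the whole case rests on this unfinished step. To close it you will need to use more than the stabilization of $\ell_3,\ell_4$---for instance, discreteness of $\Gamma$ together with the dynamics of $\Pi_{p_{12},\ell_3}(\Gamma)$ on the infinite closed invariant set coming from the $\Lambda$-pencil through $p_{12}$, in the spirit of Lemma~\ref{l:disc} and the arguments of Section~\ref{s:4lines}.
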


\section{Toral Groups}  \label{s:toral}

\begin{definition}
 Let $A\in SL(2,\Bbb{Z})$, then $A$ is said to be a Hyperbolic Toral Automorphism if none of the eigenvalues of $A$ lies on the unit circle.
\end{definition}

\begin{theorem}
 Let $A\in SL(2,\Bbb{Z})$ be an hyperbolic toral automorphism  then:

\begin{enumerate}
 \item The eigenvalues of $A$ are irrational numbers.
\item It holds  $$\{x\in \Bbb{R}^2: A^n(x)-x\in \Bbb{Z}\times \Bbb{Z} \textrm{ for some } n\in \Bbb{N}\}=\Bbb{Q}\times \Bbb{Q}.$$
\end{enumerate}
\end{theorem}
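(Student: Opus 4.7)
The plan is to handle the two claims separately, each via a short algebraic argument based on the characteristic polynomial of $A$.

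For item (i), I would start from the fact that $A\in SL(2,\mathbb{Z})$ means the characteristic polynomial is $\lambda^2-\mathrm{tr}(A)\lambda+1$, a monic polynomial with integer coefficients. Any rational eigenvalue would, by the rational root theorem, be an integer dividing the constant term $1$, so it could only be $\pm 1$. But the eigenvalues are multiplicative inverses, so this would force both eigenvalues to lie on the unit circle, contradicting the hyperbolicity hypothesis. Hence neither eigenvalue is rational.

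For item (ii), I would prove the two inclusions. The containment $\mathbb{Q}^2\subset\{x:A^nx-x\in\mathbb{Z}^2\text{ for some }n\}$ is the easy direction: if $x\in\mathbb{Q}^2$ has all coordinates expressible with a common denominator $q$, then since $A$ has integer entries, every iterate $A^k x$ lies in $\tfrac{1}{q}\mathbb{Z}^2$, so the reductions $A^k x\bmod\mathbb{Z}^2$ lie in the finite group $(\tfrac{1}{q}\mathbb{Z}/\mathbb{Z})^2$. Because $A\in SL(2,\mathbb{Z})$ is invertible, the induced map on this finite group is a bijection, so the orbit of $x\bmod\mathbb{Z}^2$ is genuinely periodic; hence $A^n x\equiv x\pmod{\mathbb{Z}^2}$ for some $n\geq 1$.

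The reverse containment is where the hyperbolicity really pays off. Suppose $(A^n-I)x\in\mathbb{Z}^2$ for some $n\geq 1$. The eigenvalues of $A^n-I$ are $\lambda_i^n-1$, and since $|\lambda_1|\neq 1\neq|\lambda_2|$ by hypothesis, no power $\lambda_i^n$ equals $1$, so $\det(A^n-I)\neq 0$. As $A^n-I$ is an invertible matrix with integer entries, its inverse has rational entries, and therefore
\[
x=(A^n-I)^{-1}\bigl((A^n-I)x\bigr)\in (A^n-I)^{-1}\mathbb{Z}^2\subset\mathbb{Q}^2.
\]
No step looks genuinely difficult; the only subtlety worth double-checking is that hyperbolicity (no eigenvalue on the unit circle) indeed rules out $\lambda_i^n=1$ for all $n\geq 1$, which it does since a root of unity has modulus $1$.
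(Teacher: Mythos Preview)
Your proof is correct and complete. The paper, however, states this theorem without proof, treating it as a standard fact about hyperbolic toral automorphisms (the relevant background reference in the bibliography is Katok--Hasselblatt). So there is no proof in the paper to compare against; your argument supplies exactly the classical reasoning one would expect: the rational root theorem for part~(i), and for part~(ii) the pigeonhole/periodicity argument on the finite set $(\tfrac{1}{q}\mathbb{Z}/\mathbb{Z})^2$ for the inclusion $\mathbb{Q}^2\subset\{\cdots\}$, together with the invertibility of $A^n-I$ over $\mathbb{Q}$ (guaranteed by $|\lambda_i|\neq 1$) for the reverse inclusion. Each step is sound, and your closing remark about why $\lambda_i^n\neq 1$ is the right sanity check.
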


\begin{definition}
Set  $S:\Bbb{Q}\times \Bbb{Q}\rightarrow \Bbb{N}$ 
which is given by: 
\[
S(x)=min\{n\in \Bbb{N}:nx\in \Bbb{Z}\times\Bbb{Z} \}.
\]
Also define   $Per:SL(2,\Bbb{Z})\times \Bbb{Q}\times \Bbb{Q}\rightarrow \Bbb{N}$
by
\[
Per(A,x)=min\{n\in \Bbb{N}:B^n(x)-x\in \Bbb{Z}\times\Bbb{Z} \}.
\]
Finally define $\phi:SL(2,\Bbb{Z})\times\Bbb{Q}\times \Bbb{Q}\times \Bbb{Z}\rightarrow \Bbb{Q}\times\Bbb{Q}$ by
\[
\phi(B,x,l)=
\left \{
\begin{array}{ll}
 \sum_{j=0}^{l-1}B^j(x) & \textrm{ if } l>0;\\
0 & \textrm{ if } l=0;\\
-\sum_{j=1}^{l}B^{-j}(x) & \textrm{ if } l<0.\\
\end{array}
\right. .
\]

\end{definition}

The following straightforward lemmas    will be usefull 

\begin{lemma}\label{l:tec1}
Let $B\in SL(2, \Bbb{Z}) $, $\nu\in \Bbb{Q}^2$ and    $r,s,l\in \Bbb{N}$ with $0<r,s<Per(B, \nu)$ . If $l=KPer(B,\nu)+r$ and $K=\tilde nS(\nu)+t$, where $K,r, \tilde n, t\in \Bbb{N}$ are given by the division theorem,   
\[
\begin{array}{l}
\delta_1=-B^r(\phi(B, B^{-KPer(B,\nu)}(\nu)-\nu),l);\\
\delta_2=-B^{Per(B,\nu)-r}(\phi(B,B^{-Per(B,\nu)}(\nu)-\nu,l));\\
\delta_3=S(\nu)B^{Per(B,\nu)-R}(\phi(B,\nu,l));\\
\delta_4=\sum_{i=0}^{K-1}\phi(B,B^{iPer(B, \nu)}(\nu)-\nu, Per(B,\nu));\\
\delta_5= \phi(B, B^{KPer(B,\nu)}(\nu)-\nu, r);\\
\delta_6=\tilde nS(\nu)\phi(B,\nu,Per(B,\nu));\\
\delta_7=\left
\{
\begin{array}{ll}
0 & \textrm{ if } r+s<Per(B,\nu)\\
\phi(B,B^{Per(B,\nu)}(\nu)-\nu, r+s-Per(B,\nu) ) & \textrm{D.O.F.}
\end{array}
\right.\\
\delta_8=\left
\{
\begin{array}{ll}
\sum_{j_r}^{r+s}B^j(\nu) & \textrm{ if } r+s<Per(B,\nu)\\
\sum_{j_r}^{Per(B,\nu)-1}B^j(\nu)+ \phi(B,\nu, r+s-Per(B,\nu) ) & \textrm{D.O.F.}
\end{array}
\right. ,
\end{array}
\]
 thus $\delta_1,\delta_2,\delta_3,\delta_4,\delta_5, \delta_6,\delta_7\in \Bbb{Z}^2$ and  
\begin{equation}
\begin{array}{l}
 \phi(B,\nu,-l )=\delta_1+\delta_2+\delta_3+(S(\nu)-1)B^{Per(B,\nu)-r}(\phi(B,\nu,l));\\ 
 \phi(B,\nu,l )=\delta_4+\delta_5+\delta_6+t\phi(B,\nu,Per(B,\nu))+\phi(B,\nu,r); \\
B^r\phi(B,\nu, s+1)=\delta_7+\delta_8
\end{array}
\end{equation}
\end{lemma}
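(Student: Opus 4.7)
The three identities are all instances of careful bookkeeping with the finite-sum cocycle $\phi$. Before tackling them I would record three elementary building blocks. First, directly from the definition as a finite sum one has the \emph{cocycle identity} $\phi(B,\nu,m+n)=\phi(B,\nu,m)+B^{m}\phi(B,\nu,n)$ for all integers $m,n$ (with the obvious adjustments when signs differ), and the \emph{linearity} $\phi(B,\alpha\nu+\beta\mu,n)=\alpha\phi(B,\nu,n)+\beta\phi(B,\mu,n)$. Second, since $B\in SL(2,\mathbb{Z})$ preserves $\mathbb{Z}^{2}$, one has $\phi(B,w,n)\in\mathbb{Z}^{2}$ whenever $w\in\mathbb{Z}^{2}$. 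Third, the defining properties give $S(\nu)\nu\in\mathbb{Z}^{2}$ and $B^{jPer(B,\nu)}(\nu)-\nu\in\mathbb{Z}^{2}$ for every $j\in\mathbb{Z}$; combined with linearity this already forces $S(\nu)\phi(B,\nu,n)\in\mathbb{Z}^{2}$ and $\phi(B,B^{jPer(B,\nu)}(\nu)-\nu,n)\in\mathbb{Z}^{2}$. With those three remarks, integrality of $\delta_{1},\dots,\delta_{7}$ is immediate.

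For the first identity, I would start from $\phi(B,\nu,-l)=-B^{-l}\phi(B,\nu,l)$, which drops straight out of the definition. Writing $-B^{-l}=-B^{Per(B,\nu)-r}B^{-(K+1)Per(B,\nu)}$ and telescoping $B^{-(K+1)Per(B,\nu)}=S(\nu)\mathrm{Id}-\big(S(\nu)\mathrm{Id}-B^{-(K+1)Per(B,\nu)}\big)$, the piece $S(\nu)B^{Per(B,\nu)-r}\phi(B,\nu,l)$ is $\delta_{3}$, the piece $-B^{Per(B,\nu)-r}\phi(B,\nu,l)$ matches $-(S(\nu)-1)B^{Per(B,\nu)-r}\phi(B,\nu,l)$ after a sign, and the remaining difference collects into $\delta_{1}+\delta_{2}$ via linearity in $\nu$ applied to $B^{-rPer(B,\nu)}\nu-\nu$ and $B^{-Per(B,\nu)}\nu-\nu$.

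For the second identity, I apply the cocycle twice. First split at $KPer(B,\nu)$ to isolate the tail $\phi(B,\nu,r)$ (after rewriting $B^{KPer(B,\nu)}\phi(B,\nu,r)$ as $\phi(B,\nu,r)+\phi(B,B^{KPer(B,\nu)}\nu-\nu,r)$, which produces $\delta_{5}$). Then break the block $\phi(B,\nu,KPer(B,\nu))$ into $K$ consecutive blocks of length $Per(B,\nu)$; by the cocycle each block is $B^{iPer(B,\nu)}\phi(B,\nu,Per(B,\nu))$, and replacing $B^{iPer(B,\nu)}$ by $\mathrm{Id}+(B^{iPer(B,\nu)}-\mathrm{Id})$ separates out the $\delta_{4}$ contribution while leaving $K\phi(B,\nu,Per(B,\nu))$; finally $K=\tilde n S(\nu)+t$ splits this into $\delta_{6}$ plus $t\phi(B,\nu,Per(B,\nu))$.

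The third identity is a case analysis on whether $r+s<Per(B,\nu)$. If so, the cocycle immediately gives $B^{r}\phi(B,\nu,s+1)=\sum_{j=r}^{r+s}B^{j}(\nu)=\delta_{8}$ and $\delta_{7}=0$. Otherwise, cut the sum at $Per(B,\nu)$ and use $B^{Per(B,\nu)}(\nu)=\nu+(B^{Per(B,\nu)}(\nu)-\nu)$ together with linearity to separate the tail into a $\phi(B,\nu,r+s-Per(B,\nu))$ piece (absorbed in $\delta_{8}$) and a $\phi(B,B^{Per(B,\nu)}(\nu)-\nu,r+s-Per(B,\nu))$ piece (which is $\delta_{7}$). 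The main obstacle is purely notational: keeping the euclidean-division indices $K,r,\tilde n,t$ consistent across all three identities, and matching signs and power shifts correctly when one alternates between the $\phi$ notation and the explicit sum; there is no genuine conceptual difficulty beyond the cocycle identity plus integrality of $S(\nu)\nu$ and $B^{Per(B,\nu)}\nu-\nu$.
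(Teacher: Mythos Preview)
The paper does not actually supply a proof of this lemma: it introduces it with the phrase ``the following straightforward lemmas will be useful'' and states it without argument. Your proposal therefore cannot be compared line by line against an original proof, but it is exactly the kind of verification the authors are implicitly deferring to the reader. The three ingredients you isolate---the cocycle relation $\phi(B,\nu,m+n)=\phi(B,\nu,m)+B^{m}\phi(B,\nu,n)$, linearity of $\phi$ in the second argument, and the integrality facts $S(\nu)\nu\in\mathbb{Z}^{2}$ and $B^{j\,Per(B,\nu)}(\nu)-\nu\in\mathbb{Z}^{2}$---are precisely what is needed, and with them the integrality of $\delta_{1},\dots,\delta_{7}$ and the three displayed identities reduce to the telescoping and block decompositions you outline. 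Your treatment of the second and third identities is clean; the first is sketched a bit loosely (the ``after a sign'' remark and the way $\delta_{1},\delta_{2}$ are recovered from the remainder could be written out more explicitly), but the strategy of rewriting $-B^{-l}$ as $-B^{Per(B,\nu)-r}B^{-(K+1)Per(B,\nu)}$ and then adding and subtracting $S(\nu)\,\mathrm{Id}$ is the right one. Given that the lemma as printed already contains several typographical slips (unbalanced parentheses in $\delta_{1}$, an ``$R$'' for ``$r$'' in $\delta_{3}$, the lower index ``$j_r$'' in $\delta_{8}$), insisting on a symbol-perfect match would be misplaced; your argument captures the substance.
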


\begin{lemma} \label{l:tec2}
 Let $(a_m), (b_m)\subset \Bbb{C}$ be sequences, then:
\begin{enumerate}
 \item If $(a_m)$ and $ (b_m)$ diverges, then the accumulation points of $$\{[a_m:b_m:1]:m\in \Bbb{N}\}$$ lies on $\overleftrightarrow{e_1,e_2}$;
\item If $(a_m)$ converges and $ (b_m)$ diverges, then $[a_m:b_m:1]\xymatrix{ \ar[r]_{m \rightarrow  \infty}&} [e_2]$;
\item  If $(a_m)$ diverges and $ (b_m)$ converges, then $[a_m:b_m:1]\xymatrix{ \ar[r]_{m \rightarrow  \infty}&} [e_1]$;
\item 
If  $k_m=[a_m:b_m:1]\xymatrix{ \ar[r]_{m \rightarrow  \infty}&} [z:0:1],$ where $z\neq 0$, then there is a subsequence of $(k_m)$, denoted $(\tilde k_m=[\tilde a_m:\tilde b_m:1])$, such that $(\tilde a_m)$ and  $(\tilde b_m)$ are convergent.
\end{enumerate}
\end{lemma}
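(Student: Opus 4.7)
The plan is to handle each of the four clauses using the elementary fact that $(a,b)\mapsto [a:b:1]$ is a homeomorphism of $\C^2$ onto the affine chart $U_3=\{[x:y:z]\in\P^2_\C:z\neq 0\}$, and that on $\P^2_\C$ one can switch to a representative by dividing the three homogeneous coordinates by whichever has largest modulus.

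For clause (i), suppose both $(a_m)$ and $(b_m)$ diverge to $\infty$. After passing to a subsequence, assume $|a_m|\geq|b_m|$ for every $m$ (otherwise swap the roles). Then for $m$ large enough one may rewrite $[a_m:b_m:1]=[1:b_m/a_m:1/a_m]$. Since $1/a_m\to 0$ and the ratio $b_m/a_m$ stays in the closed unit disk, a further subsequence makes $b_m/a_m\to c\in\C$, giving $[1:c:0]\in\overleftrightarrow{e_1,e_2}$. Any accumulation point of the original sequence arises in this way (up to the symmetric case $|b_m|\geq|a_m|$, which gives $[c':1:0]$), so every accumulation point lies on the line $\overleftrightarrow{e_1,e_2}=\{[x:y:0]\}$.

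For clauses (ii) and (iii) the argument is direct: if $a_m\to a$ and $|b_m|\to\infty$, then for large $m$ we write $[a_m:b_m:1]=[a_m/b_m:1:1/b_m]$, and both $a_m/b_m$ and $1/b_m$ tend to $0$, yielding the limit $[0:1:0]=[e_2]$. The symmetric calculation, dividing instead by $a_m$, establishes (iii). Finally, for clause (iv), convergence $[a_m:b_m:1]\to [z:0:1]$ takes place inside the affine chart $U_3$ since both the terms of the sequence and the limit have nonvanishing third coordinate equal to $1$; under the homeomorphism with $\C^2$ this is precisely the convergence $(a_m,b_m)\to(z,0)$, so in fact $a_m\to z$ and $b_m\to 0$ without any extraction needed, and the stated subsequence property follows a fortiori.

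There is no real obstacle here: the statements are consequences of the standard topology of $\P^2_\C$ and the behavior of homogeneous coordinates under rescaling. The only point requiring care is in (i), where one must take a subsequence to make the largest of the moduli be consistently realized by the same coordinate before rescaling; the rest is bookkeeping.
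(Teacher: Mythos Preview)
Your argument is correct. The paper itself states this lemma without proof, labeling it a ``straightforward'' fact, and your treatment via rescaling by the coordinate of largest modulus and using the homeomorphism of the affine chart $U_3$ with $\C^2$ is exactly the elementary verification one expects. Your observation in (iv) that no extraction is actually needed---since both the terms and the limit lie in $U_3$, so projective convergence is affine convergence $(a_m,b_m)\to(z,0)$---is a small sharpening of the statement as written.
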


\begin{definition} \label{d:gr}
Let  $A,B\in SL(2,\Bbb{Z})$,  $\nu\in \Bbb{ Q}\times\Bbb{ Q}$,  $b\in M(1\times 2,\Bbb{Z})$, $k,l\in \Bbb{Z}$, thus we define: 

\[
\left \langle k:l:b:\nu\right \rangle=
\left (
\begin{array}{lll}
A^kB^l  & b+\phi(\nu,l) \\
 0 & 1\\
\end{array}
\right ).
\]
\end{definition}

From Lemma \ref{l:tec1}, it follows easily

\begin{corollary} \label{c:for}
 Let  $A,B\in SL(2,\Bbb{Z})$,  $\nu\in \Bbb{ Q}\times\Bbb{ Q}$,  $b\in M(1\times 2,\Bbb{Z})$, $k,l\in \Bbb{Z}$, then there are $m_0, \ldots, m_{Per(B,\nu)-1}\in \{0, \ldots , S(\nu)-1\}$ and $B\in \Bbb{Z}^2$ such that:
\[
 \left \langle k:l:b:\nu\right \rangle=
\left (
\begin{array}{lll}
A^kB^l  & B+\sum_{j=0}^{Per(B,\nu)-1}m_j B^j(\nu) \\
 0 & 1\\
\end{array}
\right ),
\]
  
\end{corollary}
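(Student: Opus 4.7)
Since $b \in \mathbb{Z}^2$, the corollary will follow once I show that $\phi(B,\nu,l)$ admits a decomposition
\[
\phi(B,\nu,l) \;=\; c + \sum_{j=0}^{Per(B,\nu)-1} m_j\, B^j(\nu)
\]
with $c \in \mathbb{Z}^2$ and $m_j \in \{0,\ldots,S(\nu)-1\}$; the integer vector $B$ in the statement (a mild abuse of notation) is then $b+c$. Two arithmetic facts drive the argument: first, $S(\nu)\,B^j(\nu) \in \mathbb{Z}^2$ for every $j \in \mathbb{Z}$, because $B \in SL(2,\mathbb{Z})$ preserves the integer lattice and $S(\nu)\nu \in \mathbb{Z}^2$; second, $B^{j+Per(B,\nu)}(\nu) - B^j(\nu) \in \mathbb{Z}^2$ for every $j$, by the same invariance together with the definition of $Per(B,\nu)$. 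Together these say that the image of $\mathbb{Z}\langle B^j(\nu) : j \in \mathbb{Z}\rangle$ in $\mathbb{Q}^2/\mathbb{Z}^2$ is exhausted by expressions of the required form.

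For $l>0$, I would first apply the division algorithm to write $l = K\cdot Per(B,\nu) + r$ with $0 \le r < Per(B,\nu)$, and then $K = \tilde{n}\,S(\nu)+t$ with $0 \le t < S(\nu)$. The second equation of Lemma \ref{l:tec1} gives
\[
\phi(B,\nu,l) \;=\; \delta_4+\delta_5+\delta_6 + t\,\phi(B,\nu,Per(B,\nu)) + \phi(B,\nu,r),
\]
with $\delta_4,\delta_5,\delta_6 \in \mathbb{Z}^2$. Expanding the two $\phi$ terms by their definition yields, modulo $\mathbb{Z}^2$, the sum $\sum_{j=0}^{r-1}(t+1)B^j(\nu) + \sum_{j=r}^{Per(B,\nu)-1} t\,B^j(\nu)$. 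If $t \le S(\nu)-2$ the coefficients are already in $\{0,\ldots,S(\nu)-1\}$ and we are done; if $t=S(\nu)-1$ then $t+1=S(\nu)$ and the first arithmetic fact lets me absorb the corresponding terms into $c$, setting $m_j=0$ for $j<r$ and $m_j=S(\nu)-1$ for $j\ge r$.

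For $l<0$, the first identity of Lemma \ref{l:tec1} (applied to $-l>0$) reduces $\phi(B,\nu,l)$ modulo $\mathbb{Z}^2$ to $(S(\nu)-1)\,B^{Per(B,\nu)-r}\bigl(\phi(B,\nu,-l)\bigr)$. I then invoke the already-established $l>0$ case to express $\phi(B,\nu,-l)$ modulo $\mathbb{Z}^2$ as $\sum_{j=0}^{Per(B,\nu)-1} m'_j B^j(\nu)$, apply $B^{Per(B,\nu)-r}$ term by term, and use the second arithmetic fact to bring each exponent back into $\{0,\ldots,Per(B,\nu)-1\}$; finally a coefficient-wise reduction modulo $S(\nu)$ (by the first fact) produces the required $m_j$'s. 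The remaining case $l=0$ is immediate from $\phi(B,\nu,0)=0$.

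The proof is essentially bookkeeping: Lemma \ref{l:tec1} supplies all the algebraic identities, and the two invariance facts let me normalize exponents and coefficients. The main delicate point---and the only place where something could go wrong---is making sure that after each substitution the new coefficient still lies in $\{0,\ldots,S(\nu)-1\}$; the observation that a spillover coefficient is always exactly $S(\nu)$ (and thus removable via $S(\nu)B^j(\nu)\in\mathbb{Z}^2$) keeps the procedure finite and well-defined.
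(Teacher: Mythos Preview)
Your proposal is correct and is precisely the approach the paper intends: the paper gives no detailed proof, merely stating that the corollary ``follows easily'' from Lemma~\ref{l:tec1}, and your argument is exactly a careful unpacking of that claim using the second identity of the lemma for $l>0$, the first identity for $l<0$, and the two invariance facts $S(\nu)B^j(\nu)\in\mathbb{Z}^2$ and $B^{j+Per(B,\nu)}(\nu)-B^j(\nu)\in\mathbb{Z}^2$ to normalize coefficients and exponents.
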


\begin{proposition} \label{p:tor2}
Let  $A,B\in SL(2,\Bbb{Z})$  be such that  the group generated by $A,\,B$ is isomorphic to $\Bbb{Z}\times  \Bbb{Z}$ and  each element in $<A,B>\setminus\{Id\}$ is a hyperbolic toral automorphism, also let  $\nu\in \Bbb{ Q}\times\Bbb{ Q}$ be such that   $A(\nu)-\nu\in \Bbb{Z}\times  \Bbb{Z}$. 
Then  
\[
 \Gamma_{A,B,\nu}=\left \{\left \langle k:l:b:\nu\right \rangle\mid  k,l\in \Bbb{Z}, \,b\in M(1\times 2,\Bbb{Z} )\right   \}
\]
is a complex Kleinian  group.
Moreover $\Omega(\Gamma_{A,B,\nu})$ is projectively equivalent to: $$\bigcup_{\epsilon_1,\epsilon_2\in\{\pm 1\}} \Bbb{H}^{\epsilon_1}\times \Bbb{H}^{\epsilon_2}.$$
\end{proposition}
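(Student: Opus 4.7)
The plan has three stages: group structure with discreteness, a lower bound on $\Omega$ via the control lemma, and the reverse inclusion.

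For the group structure, the product $\langle k_1{:}l_1{:}b_1{:}\nu\rangle \langle k_2{:}l_2{:}b_2{:}\nu\rangle$ has linear block $A^{k_1+k_2}B^{l_1+l_2}$ and translation column $b_1+\phi(B,\nu,l_1)+A^{k_1}B^{l_1}(b_2+\phi(B,\nu,l_2))$. Using the cocycle identity $\phi(B,\nu,l_1+l_2)=\phi(B,\nu,l_1)+B^{l_1}\phi(B,\nu,l_2)$ implicit in Lemma \ref{l:tec1}, closure reduces to showing $(A^{k_1}-I)B^{l_1}\phi(B,\nu,l_2)\in\mathbb{Z}^2$; the hypothesis $A\nu\equiv\nu\pmod{\mathbb{Z}^2}$ combined with $AB=BA$ gives $AB^j\nu\equiv B^j\nu\pmod{\mathbb{Z}^2}$ for every $j$, so iteration completes the argument. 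Discreteness of $\Gamma_{A,B,\nu}$ then follows from Corollary \ref{c:for}, which places translation parts in the discrete set $\mathbb{Z}^2+F$ with $F$ finite.

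For the lower bound, since each non-identity element of $\langle A,B\rangle$ is hyperbolic with distinct real eigenvalues, $A$ and $B$ share a real eigenbasis; conjugating by $\mathrm{diag}(P,1)\in PSL(3,\mathbb{R})$---which furnishes the projective equivalence asserted in the conclusion---puts every $\gamma\in\Gamma$ in the normal form $(z_1,z_2)\mapsto(\lambda z_1+c_1,\lambda^{-1}z_2+c_2)$ with $\lambda=\lambda_A^k\lambda_B^l\in\mathbb{R}^\ast$ and $(c_1,c_2)\in\mathbb{R}^2$, fixing $e_1,e_2$ and preserving $\overleftrightarrow{e_1,e_2}$. Applying Lemma \ref{l:control}(iv) with $p=e_1$, $\ell=\overleftrightarrow{e_2,e_3}$, the induced group $\Pi(\Gamma)$ consists of the affine maps $z_2\mapsto\lambda^{-1}z_2+c_2$ preserving the circle $\mathbb{R}\cup\{\infty\}$; hyperbolicity forbids $\lambda_A^p\lambda_B^q=\pm 1$ for $(p,q)\neq(0,0)$, so $\log|\lambda_A|,\log|\lambda_B|$ are $\mathbb{Q}$-independent and $\Pi(\Gamma)$ is non-discrete by Kronecker density, whence Lemma \ref{l:disc}(i) gives $Eq(\Pi(\Gamma))=\mathbb{H}^+\cup\mathbb{H}^-$. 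Running the symmetric projection from $e_2$ and combining yields $\bigcup_{\epsilon_1,\epsilon_2}\mathbb{H}^{\epsilon_1}\times\mathbb{H}^{\epsilon_2}\subseteq\Omega(\Gamma)$.

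For the reverse inclusion, I place the complement in $L_0\cup L_1\cup L_2$. The fixed points $e_1,e_2$ sit in $L_0$; the $\mathbb{Z}^2$-translation subgroup of $\Gamma$ produces lattice orbits in the affine chart whose cluster set, by Lemma \ref{l:tec2}, contains every real point of $\overleftrightarrow{e_1,e_2}$, so this real circle lies in $L_1$. Points with at least one real affine coordinate are realized as cluster points of compact test sets in $\Omega$ under sequences $\gamma_n$ with multipliers $\lambda_n\to 0$ or $\infty$ and balanced translations, again via Lemma \ref{l:tec2}. I expect the main technical obstacle to be precisely this last step: one must synchronize the Kronecker density of $\{\lambda_A^k\lambda_B^l\}$ with the admissibility condition on translations $b+\phi(B,\nu,l)$ so that the cluster points fill exactly $\mathbb{P}^2_{\mathbb{C}}\setminus\bigcup\mathbb{H}^{\epsilon_1}\times\mathbb{H}^{\epsilon_2}$ without producing accumulation inside the four half-plane products.
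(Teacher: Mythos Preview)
Your second stage—the lower bound $\Omega\subseteq\Omega(\Gamma)$ via Lemma~\ref{l:control}(iv)—does not go through as written, for two reasons. First, part~(iv) of that lemma requires the line $\ell$ to be $\Gamma$-invariant, but after diagonalization the conjugated group contains elements with nonzero translation part $c_1$, so $\overleftrightarrow{e_2,e_3}$ is \emph{not} invariant (the only invariant line is $\overleftrightarrow{e_1,e_2}$, which passes through both $e_1$ and $e_2$ and so cannot serve as the target of the projection from either vertex). Second, even setting that aside, the conclusion of Lemma~\ref{l:control}(iv) is merely that $\Gamma$ acts properly discontinuously on a certain open set; it does not assert that this set lies inside the \emph{Kulkarni} region $\Omega(\Gamma)$. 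In particular, points such as $[i:0:1]$ lie in $U_2\setminus U_1$, are not in $L_0\cup L_1$ (this is precisely the content of Claim~1 in the paper's proof), and so compact sets containing them are admissible test sets for $L_2$; you have not ruled out their orbits clustering inside $\bigcup\mathbb{H}^{\epsilon_1}\times\mathbb{H}^{\epsilon_2}$. The paper bypasses both obstacles by appealing to Theorem~3.5 of \cite{bcn}: once one knows that the complement of $\Omega$ is a $\Gamma$-invariant union of lines with $LiG(\Omega)=4$, that external result gives $\Omega\subset Eq(\Gamma)\subset\Omega(\Gamma)$ directly.

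Your third stage also works harder than necessary. Rather than placing every boundary point individually into $L_0$, $L_1$, or $L_2$, the paper first observes (using a single diagonal element) that the two lines $\overleftrightarrow{e_1,e_3}$ and $\overleftrightarrow{e_2,e_3}$ lie in $\Lambda(\Gamma)$; since $\Lambda$ is closed and $\Gamma$-invariant, the entire orbit closure $\overline{\Gamma(\ell_1\cup\ell_2)}$ is then automatically in $\Lambda$, and a computation using $Eq(\Pi_j(\Gamma))$ and Lemma~\ref{l:disc} identifies that orbit closure with the complement of $\bigcup\mathbb{H}^{\epsilon_1}\times\mathbb{H}^{\epsilon_2}$. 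This avoids the synchronization difficulty you flag at the end.
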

\begin{proof}
Let $a=\langle k_1:l_1:b_1:\nu \rangle,b=\langle k_2:l_2:b_2:\nu\rangle\in \Gamma_{A,B,\nu}$. Thus an easy calculation shows:
\[
ab^{-1}=\langle k_1-k_2:l_1-l_2:b_1+b_3+b_4\rangle
\]
where 
\[
\begin{array}{l}
b_3=-A^{k_1}B^{l_1} (A^{-k_2}B^{-l_2}(b_2)+B^{-l_2}\phi(A^{-k_2}(\nu)-\nu,l_2));\\
 b_4=B^{l_1}(\phi(A^{k_1}(\nu)-\nu, -l_2)).
\end{array}
\]

Since $b_3,b_4\in \Bbb{Z}\times \Bbb{Z}$, it follows that 
 $\Gamma_{A,B,\nu}$ is a group.

Now, since $A,B\in SL(2,\Bbb{Z})$ are commuting  hyperbolic toral automorphism, it follows  that there is $\hat T\in SL(2,\Bbb{R})$ such that $\hat TA\hat T^{-1},\hat TB\hat T^{-1}$ are diagonal matrices. Set 
\[
T=
 \left (
\begin{array}{ll}
\hat T & 0 \\
 0 & 1\\
\end{array}
\right);\,
TAT^{-1}=
 \left (
\begin{array}{ll}
\alpha & 0 \\
 0 & \alpha^{-1}\\
\end{array}
\right);\,
TBT^{-1}=
 \left (
\begin{array}{ll}
\beta & 0 \\
 0 & \beta^{-1}\\
\end{array}
\right);\,
\] 
where $\alpha,\beta\in \Bbb{R}\setminus\{\pm 1\}$ and   $\hat T(1,0),\hat T(0,1)\in \Bbb{R}^2$. Moreover, given   $b\in M(1\times 2,\Bbb{Z})$ and $k,l\in \Bbb{Z}$ and taking $\nu=(\nu_1,\nu_2)$, by Corollary \ref{c:for} there are $m_0, \ldots, m_{Per(B,\nu)-1}\in \{0, \ldots , S(\nu)-1\}$ and $b_1,b_2\in \Bbb{Z}$ such that
\begin{tiny}
\begin{equation}\label{e:fortor}
T 
\left \langle k:l:b:\nu\right \rangle
T^{-1}=
 \left (
\begin{array}{lll}
\alpha^k\beta^m & 0                     & \sum_{i=1}^2x_i(b_i+\nu_i\sum_{j=0}^{Per_B(\nu)-1}m_j\beta ^j) \\
 0              & \alpha^{-k}\beta^{-m} & \sum_{i=1}^2y_i(b_i+\nu_i\sum_{j=0}^{Per_B(\nu)-1}m_j\beta ^{-j}) \\
 0              & 0                     &1\\
\end{array}
\right).
\end{equation}
\end{tiny}
Claim 1. Let $z\neq 0$, if $[z:0:1]$ lies on  $L_1(T\Gamma_{A,B,\nu}T^{-1})$, then $z\in \Bbb{R}$.  Let us assume that $Im(z\neq 0)$.  Thus there are $w=[a:b:1]$ and $(\gamma_m)\subset \Gamma_{A,B,\nu}$ a sequence of distinc elements in $ \Gamma_{A,B,\nu}^{-1}$ such that $T\gamma_mT^{-1}(w) \xymatrix{ \ar[r]_{m \rightarrow  \infty}&} x$. From equation (\ref{e:fortor}),  it follows that  for each $m\in \Bbb{N}$ there are $n_m,k_m,b_{1m}, b_{2m}\in \Bbb{Z}$ and $\{l_{j}\}_{j=0}^{Per_B(\nu)-1}\in \{0,\ldots, S(\nu)-1\}$  such that
$\gamma_m(w)=
[a_m:b_m:1]$
where 
\[
a_m=\alpha^{k_m}\beta^{n_m}a + \sum_{i=1}^2x_i\left (b_{im}+\nu_i\sum_{j=0}^{Per_B(\nu)-1}l_{j}\beta ^j\right ) ;        \]
\[
b_m=\alpha^{-k_m}\beta^{-n_m}b+ \sum_{i=1}^2y_i\left (b_{im}+\nu_i\sum_{j=0}^{Per_B(\nu)-1}l_{j}\beta ^{-j}\right ).
\]
By Lemma  \ref{l:tec2}, we can assume that $a_m\xymatrix{ \ar[r]_{m \rightarrow  \infty}&} z$ and $b_m \xymatrix{ \ar[r]_{m \rightarrow  \infty}&} 0$.
Since $Im(a_m)=\alpha^{k_m}\beta^{n_m}Im(a) \rightarrow Im(z)\neq 0$. We conclude that $(k_m)$ and $(n_m)$ are eventually constant.  In consequence we conclude that $(p_{1m})$ and $(p_{2m})$ are eventually constant. Thus $(\gamma_m)$ is eventually constant. Which is contradiction. \\

Observe that by a similar argument, we can show the  claim in the case  $x=[0:z:1]\in L_1(T\Gamma_{A,B,\nu}T^{-1})$.\\

Now let $\gamma\in T\Gamma_{A,B,\nu}T^{-1}$ induced by the linear map:
\begin{equation}
\left (
\begin{array}{lll}
\alpha & 0           & 0 \\
 0     & \alpha^{-1} & 0\\
 0     & 0           &1\\
\end{array}
\right),
 \end{equation}
then is straighfoward to check   that  $\overleftrightarrow{e_1,e_3}\cup \overleftrightarrow{e_2,e_3}\subset \Lambda(T\Gamma_{A,B,\nu}T^{-1})$.

On the other hand, from equation (\ref{e:fortor}), we conclude that $\ell_{1}=\langle\{e_2,e_3\} \rangle$, $\ell_{2}=\langle\{e_1,e_3\} \rangle$, $e_1$ and   $e_2,$ are $T\Gamma_{A,B,\nu}T^{-1}$-invariant. Thus,  taking    ,   $\pi_i=\pi_{e_i,\ell_i}$ we can define  $\Pi_i:\Gamma_0\rightarrow Bihol(l_i)$. Thus, from Equation  \ref{e:fortor}, we conclude that  $\Pi_j(T\Gamma_{A,B,\nu}T^{-1})$ leaves  $e_k$, $k\in\{1,2\}\setminus\{i\}$, and $$[\{r\alpha_1e_k+se_3\vert r,s\in \Bbb{R}\}\setminus\{0\}]$$ invariant, moreover, it   contains loxodromic and parabolic elements. Thus Lemma \ref{l:disc},  yields $$\ell_j\setminus Eq(\Pi_j(T\Gamma_{A,B,\nu}T^{-1}))=[\{r\alpha_1e_k+se_3\vert r,s\in \Bbb{R}\}\setminus\{0\}].$$  Thus a straightforward calculation shows 
\[
\overline{T\Gamma_{A,B,\nu}T^{-1}(\ell_1\cup \ell_2)}= \P^2\setminus \bigcup_{j\in \{1,2\}}\bigcup_{p\in \Bbb{R}(\ell_j)}\overleftrightarrow{e_j,p}\subset \Lambda(T\Gamma_{A,B,\nu}T^{-1}).
\]
Thus $\Omega=\bigcup_{\epsilon_1,\epsilon_2\in\{\pm 1\}} \Bbb{H}^{\epsilon_1}\times \Bbb{H}^{\epsilon_2}$ is an open $T\Gamma_{A,B,\nu}T^{-1}$-invariant set, with $LiG(\Omega)=4$. In consequence Theorem 3.5 in \cite{bcn},  yields $\Omega\subset Eq(T\Gamma_{A,B,\nu}T^{-1}))\subset  \Omega(\Gamma)$. Which clearly concludes the proof. 
\end{proof}

By means of similar arguments the following proposition can be showed.

\begin{proposition} \label{p:tor1}
 Let  $A\in SL(2,\Bbb{Z})$  be an hyperbolic toral automorphism, then the following set is a discrete  group of $PSL(3,\C)$

\[
\Gamma_A=
\left \{
\left (
\begin{array}{lll}
A^k  & b \\
 0 & 1\\
\end{array}
\right )\vert b\in M(1\times 2,\Bbb{Z}), \,k\in \Bbb{Z} k\in \Bbb{Z},  
\right \}
\]
Moreover $\Omega(\Gamma_{A,B,\nu})$ is projectively equivalent to $\bigcup_{\epsilon_1,\epsilon_2\in\{\pm 1\}} \Bbb{H}^{\epsilon_1}\times \Bbb{H}^{\epsilon_2}$.
\end{proposition}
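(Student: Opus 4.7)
The plan is to run the same template as the proof of Proposition \ref{p:tor2}, but with the simplification that we have only one hyperbolic element $A$ (so the non-discreteness of the projected group will come entirely from the translation part, not from a second toral automorphism).

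\textbf{Step 1 (group and discreteness).} A direct block-matrix computation gives
\[
\begin{pmatrix} A^{k_1} & b_1 \\ 0 & 1\end{pmatrix}\begin{pmatrix} A^{k_2} & b_2 \\ 0 & 1\end{pmatrix}^{-1}=\begin{pmatrix} A^{k_1-k_2} & b_1 - A^{k_1-k_2}b_2 \\ 0 & 1\end{pmatrix},
\]
with the upper-right entry in $\mathbb{Z}^2$ since $A\in SL(2,\mathbb{Z})$. So $\Gamma_A$ is a subgroup. Discreteness follows from the fact that $A$ is hyperbolic: its eigenvalues $\alpha,\alpha^{-1}$ satisfy $|\alpha|>1$, so $A^k\to I$ forces $k=0$, and then the translation part is in the discrete group $\mathbb{Z}^2$.

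\textbf{Step 2 (diagonalize and conjugate).} Choose $\hat T\in GL(2,\mathbb{R})$ with $\hat T A\hat T^{-1}=\mathrm{diag}(\alpha,\alpha^{-1})$, and set $T=\mathrm{diag}(\hat T,1)$. Then every element of $T\Gamma_A T^{-1}$ has the shape
\[
\begin{pmatrix} \alpha^k & 0 & (\hat T b)_1 \\ 0 & \alpha^{-k} & (\hat T b)_2 \\ 0 & 0 & 1\end{pmatrix},\qquad k\in\mathbb{Z},\ b\in\mathbb{Z}^2.
\]
In particular the points $e_1,e_2$ are fixed, and the lines $\ell_1=\overleftrightarrow{e_1,e_3}$, $\ell_2=\overleftrightarrow{e_2,e_3}$ are invariant.

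\textbf{Step 3 (density of the translation subgroup).} By the preceding theorem the eigenvalues of $A$ are irrational, hence the eigenvectors of $A^{T}$ (i.e.\ the rows of $\hat T$) are not rational multiples of integer vectors. Therefore each coordinate projection of $\hat T\mathbb{Z}^2$ is a dense additive subgroup of $\mathbb{R}$. This is the place where the cyclic case differs from Proposition \ref{p:tor2}: density comes from the $b$ part alone rather than from a second toral element.

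\textbf{Step 4 (projected groups on the invariant lines).} Apply Lemma \ref{l:control} with $(p,\ell)=(e_2,\ell_1)$ and with $(p,\ell)=(e_1,\ell_2)$, obtaining morphisms $\Pi_i:T\Gamma_A T^{-1}\to \mathrm{Bihol}(\ell_i)$. In an affine coordinate on $\ell_1\setminus\{e_1\}$ the element above acts as $t\mapsto \alpha^k t+(\hat T b)_1$, and analogously on $\ell_2$. By Step 3 these groups are non-discrete, contain the loxodromics $t\mapsto \alpha^k t$, and their loxodromic fixed points $(\hat T b)_1/(1-\alpha^k)$ are dense in $\mathbb{R}$. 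By Lemma \ref{l:disc}(3), $\mathbb{P}^1\setminus \mathrm{Eq}(\Pi_i(T\Gamma_A T^{-1}))=\mathbb{R}\cup\{\infty\}$, so $\mathrm{Eq}(\Pi_i(T\Gamma_A T^{-1}))=\mathbb{H}^{+}\cup\mathbb{H}^{-}$.

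\textbf{Step 5 (assembling $\Omega$).} Lemma \ref{l:control}(iv), applied to each of the two invariant lines, shows that $T\Gamma_A T^{-1}$ acts properly discontinuously on
\[
\Omega_i=\bigcup_{z\in\mathrm{Eq}(\Pi_i)}\overleftrightarrow{z,p_i}-(\ell_i\cup\{p_i\}).
\]
Intersecting these two open sets yields exactly $\bigcup_{\epsilon_1,\epsilon_2\in\{\pm 1\}}\mathbb{H}^{\epsilon_1}\times \mathbb{H}^{\epsilon_2}$. The reverse inclusion is obtained, as in the proof of Proposition \ref{p:tor2}, by observing that the closure of the orbit of $\ell_1\cup\ell_2$ exhausts the complement (because the loxodromic iterates of any line through the fixed points sweep out that complement), giving $\mathbb{P}^2\setminus \bigcup_{\epsilon_1,\epsilon_2}\mathbb{H}^{\epsilon_1}\times \mathbb{H}^{\epsilon_2}\subset \Lambda$. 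Finally, Theorem 3.5 of \cite{bcn} pinches both sides so that $\Omega(T\Gamma_A T^{-1})=\mathrm{Eq}(T\Gamma_A T^{-1})=\bigcup_{\epsilon_1,\epsilon_2\in\{\pm 1\}}\mathbb{H}^{\epsilon_1}\times \mathbb{H}^{\epsilon_2}$, which conjugates back to the claimed set.

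The only step needing real care is Step 3: the density of $\hat T\mathbb{Z}^2$ after projection onto each coordinate axis. Once that is in hand, Steps 4--5 are essentially a pared-down copy of the arguments already carried out in Proposition \ref{p:tor2}, so I expect no further obstacle.
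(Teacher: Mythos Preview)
Your approach is exactly the one the paper has in mind (the paper only says ``by means of similar arguments'' and gives no separate proof), and Steps~1--4 are fine; in particular your Step~3 correctly isolates the one genuinely new point, namely that density of $\Pi_i(T\Gamma_AT^{-1})$ now comes from the irrationality of the eigen\emph{directions} of $A$ (hence of the rows of $\hat T$) rather than from a second toral automorphism.

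There is one slip to repair in Step~5. The lines $\ell_1=\overleftrightarrow{e_1,e_3}$ and $\ell_2=\overleftrightarrow{e_2,e_3}$ are \emph{not} $T\Gamma_AT^{-1}$-invariant: a generic element sends $[z:0:1]$ to $[\alpha^k z+(\hat T b)_1:(\hat T b)_2:1]$, which lies on $\ell_1$ only when $(\hat T b)_2=0$. (The only invariant proper subspaces are $e_1$, $e_2$ and $\overleftrightarrow{e_1,e_2}$.) Consequently Lemma~\ref{l:control}(iv) cannot be invoked to produce your regions $\Omega_i$. This is not fatal, because that invocation is redundant in your own argument: once you have (a) the orbit--closure computation $\overline{T\Gamma_AT^{-1}(\ell_1\cup\ell_2)}=\mathbb{P}^2_{\mathbb{C}}\setminus\bigcup_{\epsilon_1,\epsilon_2}\mathbb{H}^{\epsilon_1}\times\mathbb{H}^{\epsilon_2}\subset\Lambda$, and (b) Theorem~3.5 of \cite{bcn} giving $\bigcup_{\epsilon_1,\epsilon_2}\mathbb{H}^{\epsilon_1}\times\mathbb{H}^{\epsilon_2}\subset Eq\subset\Omega(\Gamma)$, equality is forced. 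This is precisely how the paper closes the argument in Proposition~\ref{p:tor2}. So simply delete the claim that $\ell_1,\ell_2$ are invariant and the appeal to Lemma~\ref{l:control}(iv); keep the definition of $\Pi_i$ (which needs only that $e_i$ is fixed), Step~4, and the orbit-closure/Theorem~3.5 conclusion, and your proof goes through.
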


\begin{definition} A subgroup $\Gamma\subset PSL(3,\C)$ is said to be a {\it Hyperbolic Toral Group} if $\Gamma$ is conjugated to the group described either in proposition \ref{p:tor1} or the one in proposition \ref{p:tor2}.
 
\end{definition}

\section{Four lines Groups} \label{s:4lines}
Trough this section $\Gamma\subset PSL(3,\Bbb{C})$ is  a complex Kleinian group with  $LiG(\Omega(\Gamma))=4$, $\mathcal{L}\in LG(\Omega(\Gamma))$ with $card(\mathcal{L})=4$, the vertex of $ \mathcal{L}$ are $e_1,e_2$,  $\ell_{1}=\overleftrightarrow{e_2,e_3} $, $\ell_{2}=\overleftrightarrow{e_1,e_3}$, $\Gamma_0=Stab(e_1,\Gamma)\cap Stab(e_2,\Gamma)$,   $\pi_i=\pi_{e_i,\ell_i}$ and $\Pi_i=\Pi_{e_i,\ell_i}$.\\

\begin{lemma} \label{l:exlox}
 Either $\Pi_1(\Gamma_0)$  or $\Pi_2(\Gamma_0)$ contains  loxodromic elements. 
\end{lemma}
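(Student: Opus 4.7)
The plan is to argue by contradiction. Suppose that neither $\Pi_1(\Gamma_0)$ nor $\Pi_2(\Gamma_0)$ contains a loxodromic element; I will derive that $\Lambda(\Gamma_0)\subset\overleftrightarrow{e_1,e_2}$, contradicting $LiG(\Omega(\Gamma))=4$.

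First I would observe that, since $e_1$ and $e_2$ are vertices of $\mathcal{L}$, the preceding proposition on four lines gives that each $Isot(e_i,\Gamma)$ has finite index in $\Gamma$; hence $\Gamma_0=Isot(e_1,\Gamma)\cap Isot(e_2,\Gamma)$ also has finite index. The standard argument that cluster sets are preserved under finite-index inclusion yields $\Lambda(\Gamma_0)=\Lambda(\Gamma)$, so $LiG(\Omega(\Gamma_0))=4$ and in particular $\Gamma_0$ is infinite and discrete. Since every $\gamma\in\Gamma_0$ fixes $e_1$ and $e_2$, it admits an $SL(3,\mathbb{C})$-lift of the upper triangular form
\[
\widetilde{\gamma}=\begin{pmatrix}\alpha&0&a\\ 0&\beta&b\\ 0&0&c\end{pmatrix},\qquad \alpha\beta c=1,
\]
and a direct computation identifies $\Pi_1(\gamma)$ with the Möbius map $\bigl(\begin{smallmatrix}\beta&b\\ 0&c\end{smallmatrix}\bigr)$ on $\ell_1$, which is loxodromic iff $|\beta|\ne|c|$; symmetrically $\Pi_2(\gamma)$ is loxodromic iff $|\alpha|\ne|c|$. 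The contradiction hypothesis therefore forces $|\alpha|=|\beta|=|c|$ for every $\gamma\in\Gamma_0$, and together with $\alpha\beta c=1$ this yields $|\alpha|=|\beta|=|c|=1$.

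Next I would establish $\Lambda(\Gamma_0)\subset\overleftrightarrow{e_1,e_2}$ by treating each piece of the Kulkarni limit set in turn. For $L_0(\Gamma_0)$: if some $p\notin\overleftrightarrow{e_1,e_2}$ had infinite isotropy $H\subset\Gamma_0$, then $H$ would fix the three non-collinear points $e_1,e_2,p$ and would therefore, in the basis $\{e_1,e_2,p\}$, consist of diagonal matrices whose diagonal entries are the eigenvalues of $\widetilde\gamma$ and hence all of modulus one. Then $H$ would be a discrete subgroup of the compact torus $(S^1)^2$, forcing $H$ to be finite, contradicting the infinite-isotropy assumption. For $L_1(\Gamma_0)$ and $L_2(\Gamma_0)$: given a point $z=[x:y:1]$ in the affine chart (or a compact set $K$ in this chart disjoint from $\overleftrightarrow{e_1,e_2}$) and a sequence of distinct $\gamma_m\in\Gamma_0$, compactness of $S^1$ lets us assume $\alpha_m,\beta_m,c_m$ converge; discreteness of $\Gamma_0$ then rules out both $(a_m)$ and $(b_m)$ staying bounded, so at least one of them must diverge. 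Then Lemma \ref{l:tec2} applied to $\gamma_m(z)=[\alpha_m x+a_m:\beta_m y+b_m:c_m]$ places any cluster point on $\overleftrightarrow{e_1,e_2}$, and the analysis is uniform over $p\in K$ so the same conclusion handles $L_2$.

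Combining the three inclusions gives $\Lambda(\Gamma_0)\subset\overleftrightarrow{e_1,e_2}$, whence $LiG(\Omega(\Gamma_0))\le 1$, contradicting $LiG(\Omega(\Gamma_0))=4$. The main obstacle is the cluster-point step: one must use discreteness of $\Gamma_0$ together with the unit-modulus constraint on the diagonal entries to argue that the only source of new cluster points is divergence of the translation parameters $a_m,b_m$, since bounded diagonal plus bounded translations would produce a convergent sequence of distinct lifts, violating discreteness; once this is ruled out, Lemma \ref{l:tec2} cleanly forces the cluster points onto the invariant line $\overleftrightarrow{e_1,e_2}$.
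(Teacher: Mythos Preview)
Your proof is correct and follows the same overall strategy as the paper: assume no loxodromics in either projection, deduce that every $\gamma\in\Gamma_0$ has a lift with all diagonal entries of modulus one, and conclude that the ``bad'' set is contained in the single line $\overleftrightarrow{e_1,e_2}$, contradicting the existence of four lines in general position.

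The only genuine difference is the invariant you use to reach the contradiction. The paper computes the equicontinuity region and asserts (as a ``straightforward calculation'') that $Eq(\Gamma_0)=\P^2\setminus\overleftrightarrow{e_1,e_2}$, then uses $Eq(\Gamma)=Eq(\Gamma_0)$ via finite index. You instead work directly with the Kulkarni limit set, handling $L_0$, $L_1$, $L_2$ separately and invoking Lemma~\ref{l:tec2} for the cluster-point analysis. Your route is more self-contained and makes explicit the compactness/discreteness step (bounded diagonal part forces unbounded translation part), which the paper hides inside the phrase ``straightforward calculation''; the paper's route is shorter but depends on the reader knowing how equicontinuity relates to $\Lambda$ and $LiG$. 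Either way the analytic core is identical.
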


\begin{proof}
 On the contrary, let us assume that $\Pi_j(\Gamma_0)$, $j\in\{1,2\}$, does not contains loxodromic elements. Thus each element $\gamma\in \Gamma$ has a lift $\tilde \gamma\in GL(3,\C)$ which is given by:
\[
 \tilde \gamma=
\left (
\begin{array}{lll}
\gamma_{11} &  0          & \gamma_{13}\\
0           & \gamma_{22} & \gamma_{23}\\
0            & 0            & 1\\
\end{array}
\right )
\]
where $\vert \gamma_{11}\vert =\vert\gamma_{22}\vert =1$.  A straightforward calculation shows that $Eq(\Gamma_0)=\P^2\setminus \overleftrightarrow{e_1,e_2}$. Thus   $Eq(\Gamma)=Eq(\Gamma_0)$. Which is a contradition. 
\end{proof}

\begin{lemma}
 If $\Pi_{i_0}(\Gamma_0)$ contains  a loxodromic element then $\bigcap_{\tau \in \Pi_{i_0}(\Gamma_0)}Fix(\tau)$ contains a single point.
\end{lemma}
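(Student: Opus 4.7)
The plan is to argue by contradiction. Without loss of generality take $i_0=1$. First I would note the easy half: for any $\gamma\in\Gamma_0$, since $\gamma(e_2)=e_2\in\ell_1$ we have $\Pi_1(\gamma)(e_2)=\pi_1(\gamma(e_2))=e_2$, so $e_2\in\bigcap_{\tau\in\Pi_1(\Gamma_0)}Fix(\tau)$. Hence the intersection always contains $e_2$, and the content of the lemma is the absence of any other common fixed point.

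Next, I would assume for contradiction that some $p\in\ell_1\setminus\{e_2\}$ is another common fixed point and then normalize. Since $e_1,e_2,p$ are three points in general position, I can conjugate $\Gamma$ by a projective map that fixes $e_1$ and $e_2$ and sends $p$ to $e_3$, so WLOG $p=e_3$. Then $\Pi_1(\gamma)(e_3)=e_3$ forces $\gamma(e_3)\in\overleftrightarrow{e_1,e_3}$; combined with $\gamma(e_1)=e_1$ this makes $\overleftrightarrow{e_1,e_3}$ a $\Gamma_0$-invariant line, and $\overleftrightarrow{e_1,e_2}$ is also invariant because its endpoints are fixed. Imposing both line-invariances together with the fixed-point conditions $\gamma(e_j)=e_j$ for $j=1,2$ pins every $\gamma\in\Gamma_0$ to an upper-triangular lift
\[
\tilde\gamma=\left(\begin{array}{ccc}
\lambda_1 & a & c\\
0 & \lambda_2 & 0\\
0 & 0 & \mu
\end{array}\right).
\]

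The main step would then be to show $\Lambda(\Gamma_0)\subset\overleftrightarrow{e_1,e_3}\cup\overleftrightarrow{e_1,e_2}$. For any sequence of distinct $\gamma_m\in\Gamma_0$ and any convergent $k_m=[x_m:y_m:z_m]\to k_0$ with $k_0$ off those two lines, the image $\tilde\gamma_m(k_m)$ has second coordinate $\lambda_{2,m}y_m$ and third $\mu_mz_m$, while the first is a linear combination of $x_m,y_m,z_m$; a case analysis on which of these three coordinates has the largest magnitude (after passing to a subsequence) forces the limit onto the corresponding invariant line. The strata $L_0(\Gamma_0)$ and $L_2(\Gamma_0)$ are handled similarly since the eigenvectors of every $\tilde\gamma$ and cluster sets of compact orbits also sit on $\overleftrightarrow{e_1,e_2}\cup\overleftrightarrow{e_1,e_3}$. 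Because $\Gamma_0$ has finite index in $\Gamma$ (by item (\ref{4lin2}) of the earlier proposition), $\Lambda(\Gamma_0)=\Lambda(\Gamma)$, and any complex line contained in a union of two distinct complex lines must equal one of them. Hence $\Lambda(\Gamma)$ contains at most two complex lines, contradicting $LiG(\Omega(\Gamma))=4$. The main obstacle is this limit-set computation: the sequence $(\gamma_m)$ must be carefully subdivided according to which entry of $\tilde\gamma_m$ dominates along each subsequence, in order to verify in every case that the cluster point lands on one of the two invariant lines.
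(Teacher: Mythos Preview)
Your setup is fine, but note a slip in the matrix: since $\gamma(e_2)=e_2$, the $(1,2)$-entry $a$ must vanish, so the lift is actually
\[
\tilde\gamma=\begin{pmatrix}\lambda_1&0&c\\0&\lambda_2&0\\0&0&\mu\end{pmatrix}.
\]
This does not affect your later coordinate formulas (the second and third coordinates of $\tilde\gamma_m(k_m)$ are still $\lambda_{2,m}y_m$ and $\mu_m z_m$), but it is worth correcting.

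More substantively, your approach diverges from the paper's and has a genuine gap in the main step. The paper does \emph{not} compute $\Lambda(\Gamma_0)$ by a coordinatewise case analysis. Instead, once the matrix form is obtained it observes that $e_2$ is fixed and $\ell_2=\overleftrightarrow{e_1,e_3}$ is an invariant line, and then invokes Lemma~\ref{l:control} to conclude immediately that $W=\P^2\setminus(\ell_2\cup\{e_2\})$ lies in $Eq(\Gamma_0)$; since the complement of $W$ contains only one line, this contradicts $LiG=4$ after passing to $\Gamma$ via equicontinuity. Your route---bounding $\Lambda(\Gamma_0)$ directly by tracking which coordinate dominates---does not cover all cases as you describe it: if the second and third coordinates $\lambda_{2,m}y_m$ and $\mu_m z_m$ are of comparable size while the first is smaller, the cluster point lands on $\overleftrightarrow{e_2,e_3}$, which is \emph{neither} of your two lines. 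Ruling this case out requires an extra discreteness argument (if $\lambda_{2,m}/\mu_m$ and $(\lambda_{1,m}x_m+c_m)/\mu_m$ both converge with the former nonzero, one can force $\tilde\gamma_m$ to converge in $PSL(3,\C)$), but you do not supply it. You would also need to justify $\Lambda(\Gamma_0)=\Lambda(\Gamma)$ for finite-index subgroups in Kulkarni's sense, which is not automatic. The paper's use of Lemma~\ref{l:control} bypasses all of this in one stroke.
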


\begin{proof}
 Without loss of generality we may assume that $i_0=2$. Now, if $F_2=\bigcap_{\tau \in \Pi_2(\Gamma_0)}Fix(\tau)$ contains more than one point, we deduce that $F_2=\{e_1,z\}$ for some  $z\in \ell_2\setminus \{e_1\}$. By conjugating by a projective transformation, if it is necessary, we may assume that $z=e_3$. Thus each element $\gamma\in \Gamma_0$ has a lift $\tilde \gamma\in SL(3,\C)$ which is given by:
\[
 \tilde \gamma=
\left (
\begin{array}{lll}
\gamma_{11} &  0          & 0\\
0           & \gamma_{22} & \gamma_{23}\\
0            & 0            & 1\\
\end{array}
\right )
\]
where $abc=1$. In consequence $\ell_1$ and $e_1$ are invariant under the action of $\Gamma_0$.  By Lemma \ref{l:control}, $W=\P^2\setminus(\ell_1\cup \{e_1\})$ is a discontinuity region for $\Gamma_0$ which is contained in $Eq (\Gamma_0)$. In consequence $Lin(\Gamma W)<\infty$, which is a contradiction, since $\Gamma W\subset \Gamma(Eq(\Gamma_0)=Eq(\Gamma)$.
\end{proof}

\begin{lemma}
  The groups  $\Pi_{1}(\Gamma_0)$ and  $\Pi_{2}(\Gamma_0)$ contains   loxodromic elements.
\end{lemma}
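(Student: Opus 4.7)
The plan is to argue by contradiction. Suppose $\Pi_1(\Gamma_0)$ contains no loxodromic element; by Lemma~\ref{l:exlox} this forces $\Pi_2(\Gamma_0)$ to contain a loxodromic element. Every $\gamma\in\Gamma_0$ then admits a lift
\[
\tilde\gamma=\begin{pmatrix}\gamma_{11} & 0 & \gamma_{13}\\ 0 & \gamma_{22} & \gamma_{23}\\ 0 & 0 & 1\end{pmatrix}\quad\text{with}\quad |\gamma_{22}|=1.
\]
Fix a loxodromic $\gamma_0\in\Gamma_0$ with $|\gamma_{0,11}|>1$ (replacing by its inverse if needed). By the previous lemma, the common fixed point set of $\Pi_2(\Gamma_0)$ on $\ell_2$ is $\{e_1\}$, so there exists $\eta\in\Gamma_0$ such that the commutator sequence $[\eta,\gamma_0^n]$ consists of unipotent transformations whose translation part $(x_n,y_n)$ has $|x_n|\to\infty$ while $(y_n)$ is bounded (boundedness of $(y_n)$ coming from $|\gamma_{0,22}|=1$). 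This exhibits rich parabolic dynamics skewed toward the $e_1$--direction.

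The core of the argument is to analyze an arbitrary divergent sequence $(\gamma_m)\subset\Gamma_0$. Normalizing by $M_m=\max(|\gamma_{11,m}|,|\gamma_{13,m}|,|\gamma_{23,m}|,1)$ and extracting a convergent subsequence, the $(2,2)$ and $(3,3)$ entries of the limit vanish (since $|\gamma_{22,m}|=1$ and $M_m\to\infty$), so the limit matrix takes the form $T=\begin{pmatrix}\alpha & 0 & \gamma\\ 0 & 0 & \beta\\ 0 & 0 & 0\end{pmatrix}$ with image contained in $\overleftrightarrow{e_1,e_2}$. A case analysis on $(\alpha,\beta,\gamma)$ shows that the indeterminacy locus of $T$ is one of: the single point $\{e_2\}$, the line $\overleftrightarrow{e_1,e_2}$, the line $\ell_1$, or a line through $e_2$ and a point of $\ell_2$; in every case it is contained in a line through $e_2$. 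Consequently, for every $p\in\mathbb{P}^2_{\mathbb{C}}$ that is off $\overleftrightarrow{e_1,e_2}\cup\ell_1$ and off every such accumulated ``bad'' line through $e_2$, every cluster point of $\Gamma_0\cdot p$ lies in $\overleftrightarrow{e_1,e_2}$, so $p\in Eq(\Gamma_0)$.

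For the contradiction, choose $\ell_3\in\mathcal{L}$ a line through $e_1$ distinct from $\ell_2$: since the four lines of $\mathcal{L}$ are in general position and $e_1,e_2$ are its only vertices, exactly two lines of $\mathcal{L}$ pass through each vertex, so such an $\ell_3$ exists and does not pass through $e_2$. Thus $\ell_3$ is not one of the possible indeterminacy lines, and its intersection with the union of $\overleftrightarrow{e_1,e_2}$, $\ell_1$ and the countable family of bad lines through $e_2$ is a countable set of points. Therefore an open dense subset of $\ell_3$ lies in $Eq(\Gamma_0)$. Transferring this information to $\Gamma$ via the finite-index inclusion $[\Gamma:\Gamma_0]<\infty$ yields an open subset of $\ell_3$ in $\Omega(\Gamma)$, contradicting $\ell_3\subset\Lambda(\Gamma)$.

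The main obstacle is the third step: rigorously showing that the aggregate of ``bad'' lines through $e_2$ arising as indeterminacy loci of subsequential limits is small enough that its intersection with $\ell_3$ is not dense. This amounts to controlling the set of ratios $\gamma_{13,m}/\gamma_{11,m}$ that can arise as subsequential limits along divergent sequences in $\Gamma_0$, and one expects the required thinness to come from the discreteness of $\Gamma_0$ combined with the rigid constraint $|\gamma_{22}|=1$ imposed by the assumption.
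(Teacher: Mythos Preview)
Your proposal has a genuine gap, and you yourself flag it: you never establish that the family of ``bad'' lines through $e_2$ (the indeterminacy loci of subsequential matrix limits) meets $\ell_3$ in a non-dense set. Without this, you cannot conclude that any open subset of $\ell_3$ lies in $Eq(\Gamma_0)$, and the contradiction does not close. There is no obvious reason why discreteness of $\Gamma_0$ together with $|\gamma_{22}|=1$ should force the set of accumulation ratios $\gamma_{13,m}/\gamma_{11,m}$ to be countable or otherwise thin; this would require a separate argument that you have not supplied.

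The paper bypasses the equicontinuity analysis entirely and gives a short discreteness contradiction, and in fact your first paragraph already contains the seed of it. In your notation (so $\Pi_2(\Gamma_0)$ has the loxodromics, $|\gamma_{22}|=1$ for all $\gamma$): by the previous lemma the common fixed set of $\Pi_2(\Gamma_0)$ on $\ell_2$ is the single point $e_1$, so one can choose $\gamma,\tau\in\Gamma_0$ with $\Pi_2(\gamma),\Pi_2(\tau)$ loxodromic and $Fix(\Pi_2(\gamma))\neq Fix(\Pi_2(\tau))$. Their commutator $\kappa=\tau\gamma\tau^{-1}\gamma^{-1}$ then satisfies: $\Pi_2(\kappa)$ is parabolic (commutator of two loxodromics sharing only the fixed point $e_1$), while $\Pi_1(\kappa)$ is the identity. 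Hence $\kappa$ has a lift
\[
\tilde\kappa=\begin{pmatrix}1&0&\kappa_{13}\\0&1&0\\0&0&1\end{pmatrix},\qquad \kappa_{13}\neq 0.
\]
Now pick any $\gamma_0\in\Gamma_0$ with $\Pi_2(\gamma_0)$ loxodromic, conjugate so that $Fix(\Pi_2(\gamma_0))=\{e_1,e_3\}$, and (replacing $\gamma_0$ by $\gamma_0^{-1}$ if necessary) arrange $|\gamma_{0,11}|<1$. Then
\[
\tilde\gamma_0^{\,m}\,\tilde\kappa\,\tilde\gamma_0^{\,-m}=\begin{pmatrix}1&0&\gamma_{0,11}^{\,m}\kappa_{13}\\0&1&0\\0&0&1\end{pmatrix}\longrightarrow Id,
\]
contradicting discreteness. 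The point you were missing is that one should take the commutator of \emph{two} $\Pi_2$-loxodromics (to kill the $\Pi_1$-part completely), rather than $[\eta,\gamma_0^n]$, and then conjugate the resulting single unipotent $\kappa$ by powers of a loxodromic to squeeze it to the identity.
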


\begin{proof} By Lemma \ref{l:exlox}, either $\Pi_1(\Gamma_0)$ or $\Pi_2(\Gamma_0)$ contains loxodromic elements.
 Without loss of generality let us assume that $\Pi_1(\Gamma_0)$ contains a loxodromic element. If $\Pi_2(\Gamma_0)$ does not contains loxodromic elements, every element $\gamma\in \Gamma_0$ has a lift $\gamma\in GL(3,\C)$ which is given by:

\[
 \tilde \gamma=
\left (
\begin{array}{lll}
\gamma_{11}   &  0          & \gamma_{13}\\
0             & \gamma_{22} & \gamma_{23}\\
0              & 0            & 1\\
\end{array}
\right )
\]
where $\vert \gamma\vert =1$. In consequence there are $\gamma,\tau\in \Gamma_0$  such that $\Pi_1(\gamma)$ and $\Pi_1(\tau )$ are loxodromic elements with $Fix (\Pi_1(\gamma)) \neq Fix(\Pi_1(\tau ))$,  $\Pi_2(\gamma)$ is  either parabolic or the identity and $\Pi_2(\tau)$ is  either parabolic or the identity. An easy calculation shows $\Pi_1(\tau\gamma\tau^{-1}\gamma^{-1})$ is parabolic and $\Pi_1(\tau\gamma\tau^{-1}\gamma^{-1})$ is the identity. In consequence $\kappa=\tau\gamma\tau^{-1}\gamma^{-1}$ has a lift $\tilde \kappa\in SL(3,\C)$ given by:
 
\[
 \tilde \gamma=
\left (
\begin{array}{lll}
1  &  0 & 0\\
0  & 1  & \kappa_{23}\\
0  & 0  & 1\\
\end{array}
\right ).
\]
 Finally, let  $\gamma_0\in \Gamma_0$ be such that $\Pi_1(\gamma_0)$ is a loxodromic  element. By conjugating with a projective transformation, if it is necessary, we may assume that  $Fix(\Pi_1(\gamma_0))=\{e_2,e_3\}$. Also, by taking the Inverse of $\gamma$, if it is necessary, we may assume that $e_2$ is an attracting point for $\Pi_1(\gamma_0)$.  In consequence, if $\tilde \gamma_0=(\gamma_{ij}\in SL(3,\C))$ is a lift of $\gamma$, we conclude that: 

\[
 \tilde \gamma^m\tilde \kappa\tilde\gamma^{-m}=
\left (
\begin{array}{lll}
1  &  0 & 0\\
0  & 1  &\gamma_{22} ^m\kappa_{23}\\
0  & 0  & 1\\
\end{array}
\right ).
\xymatrix{ \ar[r]_{m \rightarrow  \infty}&}
Id,
\]
which is a contradiction, since $\Gamma$ is discrete.
\end{proof}

\begin{lemma}
There is an element $\gamma_0\in \Gamma_0$ such that    $\Pi_{1}(\gamma_0)$ and  $\Pi_{2}(\gamma_0)$ are loxodromic.
\end{lemma}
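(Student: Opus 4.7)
The plan is to exploit the explicit matrix form of elements in $\Gamma_0$ together with the two loxodromic generators produced by the previous lemma. First, since every $\gamma\in\Gamma_0$ fixes both $e_1$ and $e_2$, a lift of $\gamma$ must send $e_1$ and $e_2$ to multiples of themselves, so it has the block form
\[
\tilde\gamma=\begin{pmatrix} a & 0 & c_{13}\\ 0 & b & c_{23}\\ 0 & 0 & c_{33}\end{pmatrix}.
\]
Since $\det\tilde\gamma\neq 0$ we have $c_{33}\neq 0$, so rescaling we may assume $c_{33}=1$. A direct computation in the coordinates $[0:y:z]$ on $\ell_1$ and $[x:0:z]$ on $\ell_2$ shows that $\Pi_1(\gamma)$ acts as the affine map $w\mapsto bw+c_{23}$ on $\ell_1$ and $\Pi_2(\gamma)$ acts as $u\mapsto au+c_{13}$ on $\ell_2$. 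Consequently
\[
\Pi_1(\gamma)\text{ is loxodromic}\iff |b|\neq 1,\qquad \Pi_2(\gamma)\text{ is loxodromic}\iff |a|\neq 1.
\]

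Next I would observe that under the normalization above, the assignment $\gamma\mapsto (a(\gamma),b(\gamma))\in\C^*\times\C^*$ is multiplicative. Indeed, multiplying two such normalized matrices yields a matrix whose $(3,3)$-entry is again $1$ and whose $(1,1)$- and $(2,2)$-entries are $a_1a_2$ and $b_1b_2$ respectively. In particular, taking absolute values, $\gamma\mapsto(|a(\gamma)|,|b(\gamma)|)$ is a homomorphism into $(\mathbb R_{>0})^2$.

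Now invoke the preceding lemma to fix $\gamma_1,\gamma_2\in\Gamma_0$ such that $\Pi_1(\gamma_1)$ and $\Pi_2(\gamma_2)$ are loxodromic, i.e.\ $|b(\gamma_1)|\neq 1$ and $|a(\gamma_2)|\neq 1$. Three cases are then possible. If $|a(\gamma_1)|\neq 1$ then $\gamma_0:=\gamma_1$ already works; symmetrically, if $|b(\gamma_2)|\neq 1$ then $\gamma_0:=\gamma_2$ works. In the remaining case $|a(\gamma_1)|=1$ and $|b(\gamma_2)|=1$, and I would set $\gamma_0:=\gamma_1\gamma_2$; by the homomorphism property,
\[
|a(\gamma_0)|=|a(\gamma_1)||a(\gamma_2)|=|a(\gamma_2)|\neq 1,\qquad |b(\gamma_0)|=|b(\gamma_1)||b(\gamma_2)|=|b(\gamma_1)|\neq 1,
\]
so both $\Pi_1(\gamma_0)$ and $\Pi_2(\gamma_0)$ are loxodromic.

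There is no real obstacle here: the statement is essentially a bookkeeping fact once one has identified the two independent multipliers $a$ and $b$ and their multiplicativity. The only point requiring a small check is that the normalization $c_{33}=1$ is permissible, which follows from the invertibility of the lift.
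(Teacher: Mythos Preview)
Your proof is correct and follows essentially the same route as the paper: both arguments pick $\gamma_1,\gamma_2$ with $\Pi_1(\gamma_1)$, $\Pi_2(\gamma_2)$ loxodromic and observe that the product $\gamma_1\gamma_2$ does the job, the key point being the multiplicativity of the diagonal multipliers. The only cosmetic difference is that the paper phrases it as a proof by contradiction and leaves the ``straightforward check'' implicit, whereas you spell out the matrix form, the affine description of $\Pi_j$, and the homomorphism $\gamma\mapsto(|a|,|b|)$ explicitly.
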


\begin{proof}
If this is not the case,  there are $\gamma_1,\gamma_2\in \Gamma_0$ such that $\Pi_j(\gamma_k)$ is loxodromic if $j= k$ and either the identity or parabolic  in other case. Is straightforward to check that  $\Pi_j(\gamma_1 \gamma_2)$, $j\in \{1,2\}$, is loxodromic. Which is a contradiction. 
\end{proof}

From now on  $\gamma_L$ will denote a fixed element in $\Gamma_0$ such that $\Pi_j(\gamma_L)$, $j\in\{1,2\}$, is loxodromic. Also, by conjugating with a projective transformation, if it is necessary, we may assume that $\gamma_L$ has a lift $\tilde{\gamma}_L=
(\gamma_{Lij})$ which is a diagonal matrix.

\begin{lemma} \label{l:loxlox}
There is an element $\gamma\in \Gamma_0$ such that    $\Pi_{1}(\tau_0)$ and  $\Pi_{2}(\tau_0)$ are  parabolic elements.
\end{lemma}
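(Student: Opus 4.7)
\medskip

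\noindent\textbf{Proof plan.} The plan is to produce $\tau_0$ as a commutator $[\gamma_L,\sigma]$ for a carefully chosen $\sigma\in\Gamma_0$. Since every element of $\Gamma_0$ fixes both $e_1$ and $e_2$, it admits an $SL(3,\C)$ lift of the form
\[
\tilde\sigma=\begin{pmatrix} s & 0 & u \\ 0 & t & v \\ 0 & 0 & 1 \end{pmatrix},
\]
while $\gamma_L$, by the construction preceding the statement, has a diagonal lift that, after rescaling so the $(3,3)$ entry equals $1$, reads $\mathrm{diag}(\lambda_1,\lambda_2,1)$ with $|\lambda_1|\neq 1$ and $|\lambda_2|\neq 1$. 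A mechanical commutator calculation then yields the unipotent matrix
\[
[\tilde\gamma_L,\tilde\sigma]=\begin{pmatrix} 1 & 0 & (\lambda_1-1)u \\ 0 & 1 & (\lambda_2-1)v \\ 0 & 0 & 1 \end{pmatrix}.
\]
Reading off the induced maps on $\ell_1$ and $\ell_2$, one sees at once that both $\Pi_1([\gamma_L,\sigma])$ and $\Pi_2([\gamma_L,\sigma])$ act as nontrivial translations---hence are parabolic---as soon as $u\neq 0$ and $v\neq 0$, since $\lambda_i-1\neq 0$.

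The main obstacle is therefore to produce $\sigma\in\Gamma_0$ whose lift has both $u$ and $v$ nonzero. For this I will invoke the earlier lemma asserting that, for each $i\in\{1,2\}$, the common fixed locus of $\Pi_i(\Gamma_0)$ reduces to a single point. Since $e_1$ is fixed by every element of $\Pi_2(\Gamma_0)$, the point $e_3$ cannot also be; hence there exists $\sigma_1\in\Gamma_0$ whose lift has $(1,3)$ entry $u_1\neq 0$. By the symmetric argument applied to $\Pi_1$, there exists $\sigma_2\in\Gamma_0$ with $(2,3)$ entry $v_2\neq 0$ in its lift.

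I will finish with a short case split. If $\sigma_1$ already has $v_1\neq 0$, set $\sigma=\sigma_1$; if $\sigma_2$ already has $u_2\neq 0$, set $\sigma=\sigma_2$. Otherwise $v_1=u_2=0$, and a direct multiplication shows that $\widetilde{\sigma_1\sigma_2}$ has $(1,3)$ entry $u_1\neq 0$ and $(2,3)$ entry $t_1v_2\neq 0$ (the factor $t_1$ is nonzero because $\tilde\sigma_1$ is invertible), so set $\sigma=\sigma_1\sigma_2$. In every case $\tau_0=[\gamma_L,\sigma]$ has the required property. Beyond this case analysis and the commutator computation, nothing delicate is required; the key input is the previously established rigidity on common fixed loci of $\Pi_i(\Gamma_0)$.
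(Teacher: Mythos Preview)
Your argument is correct and follows the same overall architecture as the paper's proof: produce the desired element as a commutator with the diagonal loxodromic $\gamma_L$, and repair the degenerate case by multiplying two auxiliary elements. The execution differs in a useful way, though. The paper first locates, for each $j$, an element $\gamma_j$ with $\Pi_j(\gamma_j)$ \emph{loxodromic} and $Fix(\Pi_j(\gamma_j))\neq Fix(\Pi_j(\gamma_L))$, then invokes the M\"obius-group fact that a commutator of loxodromics with distinct fixed sets is parabolic; it forms $\kappa_j=[\gamma_L,\gamma_j]$ and, in the degenerate case, takes the product $\kappa_1\kappa_2$ \emph{after} the commutators. You instead read the condition ``$\Pi_j$ does not fix $e_3$'' directly as a nonvanishing $(j,3)$ entry (which is exactly what the single-fixed-point lemma delivers, with no need to upgrade to a loxodromic), compute $[\gamma_L,\sigma]$ explicitly as a unipotent matrix, and take the product $\sigma_1\sigma_2$ \emph{before} the commutator. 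Your route is slightly more elementary: it avoids the extra step of finding a loxodromic with displaced fixed points and replaces the M\"obius-group lemma by a two-line matrix multiplication.
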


\begin{proof}
 Let $j\in \{1,2\}$, then   there is an element $\gamma_j$ such that $\Pi_{j}(\gamma_j)$  is loxodromic and  $Fix(\Pi_{j}(\gamma_j))\neq Fix(\Pi_{j}(\gamma_L)) $. Set $\kappa_j=\gamma_L\gamma_j\gamma_L^{-1}\gamma_j^{-1}$, then $\Pi_i(\kappa_j)$ is parabolic if $i=j$ and is either the identity or parabolic in other case. Thus the only interesting case is $\Pi_1(\kappa_2)=Id$ and $\Pi_2(\kappa_1)=Id$. But in such case a simple calculation shows that $\Pi_j(\kappa_1\kappa_2)$, $j\in\{1,2\}$, is parabolic.  
\end{proof}

From now on  $\gamma_P$ will denote a fixed element in $\Gamma_0$ with a lift $(\gamma_{Pij}),$  such that $\Pi_j(\gamma_0)$, $j\in\{1,2\}$, is parabolic.

\begin{lemma}
If $\mid\gamma_{L11}\mid <\mid\gamma_{L33}\mid $, then $\mid\gamma_{L22}\mid >\mid\gamma_{L33}\mid $.
\end{lemma}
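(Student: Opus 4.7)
The plan is to argue by contradiction. Suppose $|\gamma_{L11}|<|\gamma_{L33}|$ and, contrary to the conclusion, $|\gamma_{L22}|\le|\gamma_{L33}|$. Because $\Pi_1(\gamma_L)$ acts loxodromically on $\ell_1$, its two fixed points $e_2,e_3$ must have distinct multipliers, forcing $|\gamma_{L22}|\ne|\gamma_{L33}|$, so in fact $|\gamma_{L22}|<|\gamma_{L33}|$. From this I will manufacture a sequence of pairwise distinct elements of $\Gamma$ converging to the identity, contradicting discreteness.

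First I would extract a clean normal form for the parabolic element $\gamma_P\in\Gamma_0$ supplied by Lemma \ref{l:loxlox}. Since $\gamma_P$ fixes both $e_1$ and $e_2$, every lift is upper-triangular,
\[
\tilde\gamma_P = \left(\begin{array}{lll} a & 0 & c_1 \\ 0 & b & c_2 \\ 0 & 0 & d \end{array}\right), \qquad abd = 1.
\]
Reading off the $2\times 2$ blocks that represent $\Pi_1(\gamma_P)$ on $\ell_1$ and $\Pi_2(\gamma_P)$ on $\ell_2$, the simultaneous parabolicity of both projections forces $b=d$ with $c_2\ne 0$ and $a=d$ with $c_1\ne 0$ (otherwise the corresponding projection would be the identity or loxodromic). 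Combined with $abd=1$, the common value $a=b=d$ is a cube root of unity, which is absorbed in $PSL(3,\mathbb{C})$. Hence $\gamma_P$ admits the unipotent lift
\[
\tilde\gamma_P = \left(\begin{array}{lll} 1 & 0 & p \\ 0 & 1 & q \\ 0 & 0 & 1 \end{array}\right), \qquad p,\,q \ne 0.
\]

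Next I would conjugate by powers of $\gamma_L$. Since $\tilde\gamma_L$ is the diagonal matrix with entries $\gamma_{L11},\gamma_{L22},\gamma_{L33}$, a direct multiplication yields
\[
\tilde\gamma_L^{\,n}\,\tilde\gamma_P\,\tilde\gamma_L^{\,-n} = \left(\begin{array}{lll} 1 & 0 & (\gamma_{L11}/\gamma_{L33})^n\,p \\ 0 & 1 & (\gamma_{L22}/\gamma_{L33})^n\,q \\ 0 & 0 & 1 \end{array}\right).
\]
Under the standing assumption $|\gamma_{L11}/\gamma_{L33}|<1$ and $|\gamma_{L22}/\gamma_{L33}|<1$, both the $(1,3)$- and $(2,3)$-entries decay to zero geometrically, so the sequence $(\gamma_L^{\,n}\gamma_P\gamma_L^{\,-n})_{n\ge 1}$ tends to the identity in $PSL(3,\mathbb{C})$. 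These elements are pairwise distinct because $|(\gamma_{L11}/\gamma_{L33})^n\,p|$ is strictly decreasing in $n$ (with $p\ne 0$), and two matrices of the displayed unipotent form represent the same element of $PSL(3,\mathbb{C})$ only when their third columns agree. This contradicts the discreteness of $\Gamma$, proving $|\gamma_{L22}|>|\gamma_{L33}|$.

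The only delicate step I anticipate is the structural normalization of $\gamma_P$: verifying that the simultaneous parabolicity of the two projections pins the diagonal of a lift to a common scalar (modulo the cube-root ambiguity) and that the first two entries of the third column are nonzero. Once this normal form is in hand, the conjugation-by-$\gamma_L^{\,n}$ calculation and the appeal to discreteness close the argument at once.
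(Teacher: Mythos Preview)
Your proof is correct and follows essentially the same route as the paper: assume the contrary, conjugate the bi-parabolic element $\gamma_P$ by powers of the diagonal $\gamma_L$, and obtain a sequence of distinct unipotent elements converging to the identity, contradicting discreteness. You are in fact slightly more careful than the paper, since you explicitly rule out the borderline case $|\gamma_{L22}|=|\gamma_{L33}|$ via loxodromicity of $\Pi_1(\gamma_L)$ and justify the unipotent normal form of $\gamma_P$, whereas the paper jumps directly to the strict inequality and uses $\gamma_{P13},\gamma_{P23}$ without comment.
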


\begin{proof}
 On the contrary, let us assume that $\vert \gamma_{L22}\vert<\vert \gamma_{L33}\vert $. Then a straightforward calculations shows that:
\[
\kappa_m=\left (
\begin{array}{lll}
1  & 0 & \gamma_{L11}^m\gamma_{P13}\gamma_{L33}^{-m}\\
0  & 1 & \gamma_{L22}^m\gamma_{P23}\gamma_{L33}^{-m}\\
0  & 0 & 1\\
\end{array}
\right ).
\]
is a lift of $\gamma_L^{-m}\gamma_P\gamma_L^m$. And clearly $[[\kappa_m]] \xymatrix{ \ar[r]_{m \rightarrow  \infty}&}  Id$, which is a contradiction, since $\Gamma_0$ is discrete.
\end{proof}

\begin{lemma}
The sets    $\P^2\setminus Eq(\Pi_{1}(\Gamma_0))$ and  $\P^2\setminus Eq(\Pi_{2}(\Gamma_0))$ are circles.
\end{lemma}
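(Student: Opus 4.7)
The strategy is to combine the preceding lemmas (which furnish $\gamma_L,\gamma_P\in\Gamma_0$ such that $\Pi_j(\gamma_L)$ is loxodromic and $\Pi_j(\gamma_P)$ is parabolic for each $j$) with Lemma \ref{l:disc}, narrowing the five possibilities for $\mathbb{P}^1\setminus Eq(\Pi_j(\Gamma_0))$ down to a circle by exploiting the hypothesis $LiG(\Omega(\Gamma))=4$.

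First I put $\ell_j$ in affine coordinates sending the common fixed point $e_{3-j}$ of $\Pi_j(\Gamma_0)$ to $\infty$. After a further conjugation placing the other loxodromic fixed point of $\Pi_j(\gamma_L)$ at the origin, $\Pi_j(\gamma_L)$ takes the form $u\mapsto\mu_j u$ with $|\mu_j|\neq 1$ and $\Pi_j(\gamma_P)$ takes the form $u\mapsto u+c_j$ with $c_j\neq 0$. The conjugates $\Pi_j(\gamma_L)^n\Pi_j(\gamma_P)\Pi_j(\gamma_L)^{-n}$ are then the translations $u\mapsto u+\mu_j^n c_j$, and because $|\mu_j|\neq 1$ they accumulate at the identity. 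Hence $\Pi_j(\Gamma_0)$ is non-discrete, and Lemma \ref{l:disc}(1) restricts $\mathbb{P}^1\setminus Eq(\Pi_j(\Gamma_0))$ to the list $\{\emptyset,\{p\},\{p,q\},\text{circle},\mathbb{P}^1\}$. Conjugating $\Pi_j(\gamma_L)$ by the translations above produces loxodromic elements with pairwise distinct second fixed points $\mu_j^n c_j$, so by Lemma \ref{l:disc}(3) the closure of this infinite set is contained in $\mathbb{P}^1\setminus Eq(\Pi_j(\Gamma_0))$, excluding the first three cases.

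To rule out $\mathbb{P}^1$, suppose for contradiction that $\mathbb{P}^1\setminus Eq(\Pi_1(\Gamma_0))=\mathbb{P}^1$. The set $\mathcal{C}=\overline{\Pi_1(\Gamma_0)\cdot e_3}\cup\{e_2\}$ is $\Pi_1(\Gamma_0)$-invariant, closed, and has at least two points (since $\Pi_1(\gamma_P)(e_3)\neq e_3$), so Lemma \ref{l:disc}(2) forces $\mathbb{P}^1\subset\mathcal{C}$ and hence the orbit $\{\pi_1(\tau(e_3)):\tau\in\Gamma_0\}$ is dense in $\ell_1$. For each $\tau\in\Gamma_0$, the conjugate $\tau\gamma_L\tau^{-1}$ is strongly loxodromic with the three distinct eigenvalues of $\gamma_L$ and with fixed points $e_1,e_2,\tau(e_3)$; by the ordering $|\gamma_{L11}|<|\gamma_{L33}|<|\gamma_{L22}|$ established in the preceding lemma, $\tau(e_3)$ is the middle fixed point, so the Kulkarni limit set of $\langle\tau\gamma_L\tau^{-1}\rangle$ equals $\overleftrightarrow{e_1,\tau(e_3)}\cup\overleftrightarrow{e_2,\tau(e_3)}$. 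Since $\overleftrightarrow{e_1,\tau(e_3)}=\overleftrightarrow{e_1,\pi_1(\tau(e_3))}$, we obtain a dense family of lines through $e_1$ contained in $\Lambda(\Gamma)$, whose union has closure $\mathbb{P}^2$, contradicting $\Omega(\Gamma)\neq\emptyset$. The argument for $\Pi_2(\Gamma_0)$ is identical.

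The main technical point is this final exclusion: one must verify that the conjugate $\tau\gamma_L\tau^{-1}$ remains strongly loxodromic with the correct eigenvalue ordering (which follows since conjugation preserves eigenvalues and the ordering was fixed for $\gamma_L$), and then invoke the standard Kulkarni description of the limit set of a strongly loxodromic cyclic subgroup of $PSL(3,\mathbb{C})$ to feed the right pencil of lines into $\Lambda(\Gamma)$.
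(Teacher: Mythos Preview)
Your argument is correct, but the route you take to exclude the case $\ell_j\setminus Eq(\Pi_j(\Gamma_0))=\ell_j$ is considerably more elaborate than the paper's. The paper exploits the vertex hypothesis directly: since $e_j$ is a vertex of $\mathcal{L}$, there are infinitely many lines through $e_j$ contained in $\Lambda(\Gamma)$, and their $\pi_j$-images form a closed, infinite, $\Pi_j(\Gamma_0)$-invariant set $\mathcal{C}_j\subset\ell_j$ which is \emph{proper} (otherwise every line through $e_j$ would lie in $\Lambda(\Gamma)$ and $\Omega(\Gamma)=\emptyset$). Lemma~\ref{l:disc}(ii) then gives $\ell_j\setminus Eq(\Pi_j(\Gamma_0))\subset\mathcal{C}_j\subsetneq\ell_j$, disposing of the $\mathbb{P}^1$ case in one line. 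You instead rebuild this pencil of limit lines from scratch, via the Kulkarni limit sets of the cyclic groups $\langle\tau\gamma_L\tau^{-1}\rangle$; this works, but it imports an extra (standard) fact---that the two limit lines of a strongly loxodromic element of $\Gamma$ lie in $\Lambda(\Gamma)$---which the paper's argument avoids entirely.

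On the other hand, your write-up is more explicit on two points the paper glosses over: you verify that $\Pi_j(\Gamma_0)$ is non-discrete (needed to invoke Lemma~\ref{l:disc} at all), and you spell out, via the conjugates $\gamma_L^n\gamma_P\gamma_L^{-n}$, why there are infinitely many loxodromic fixed points, which together with Lemma~\ref{l:disc}(iii) eliminates the finite cases. The paper's terse ``Lemma~\ref{l:disc} yields the result'' leaves both of these to the reader.
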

\begin{proof}
 Since the vertex of $\mathcal{L}$ are $ e_1,e_2$ it follows that 
\[
\mathcal{C}_j=\Pi_j\left (\bigcup \{\ell\in Gr_1(\P^2)\vert e_j \in \ell, \ell\subset \Lambda(\Gamma)\}\right ),
\]
is a closet, infinite and  $\Pi_j(\Gamma_0)$-invariant set. Thus  Lemma \ref{l:disc} yields the result.
 
\end{proof}

\begin{lemma}
Up to conjugacy  $\Gamma_0$ leaves $\Bbb{P}^3_\Bbb{R}$ invariant. 
\end{lemma}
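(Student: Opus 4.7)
The plan is to locate the invariant circles $\mathcal{C}_j = \ell_j \setminus Eq(\Pi_j(\Gamma_0))$ using the diagonal loxodromic element $\gamma_L$, and then apply a single diagonal conjugation that simultaneously straightens both circles to the real projective line. First, because the normalized lift $\tilde\gamma_L = (\gamma_{Lij})$ is diagonal and $\Pi_1(\gamma_L), \Pi_2(\gamma_L)$ are loxodromic, the fixed points of $\Pi_1(\gamma_L)$ on $\ell_1$ are $\{e_2, e_3\}$, and those of $\Pi_2(\gamma_L)$ on $\ell_2$ are $\{e_1, e_3\}$. By Lemma \ref{l:disc}(iii) these four points lie in the circles $\mathcal{C}_1, \mathcal{C}_2$ respectively.

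Next I would parametrize $\ell_1$ by $t \leftrightarrow [0:t:1]$ (so $e_2 \leftrightarrow 0$, $e_3 \leftrightarrow \infty$) and $\ell_2$ by $s \leftrightarrow [s:0:1]$ (so $e_1 \leftrightarrow 0$, $e_3 \leftrightarrow \infty$). Any circle through $0$ and $\infty$ in $\mathbb{P}^1$ is a line of the form $\mathbb{R} e^{i\theta}$, hence we may write $\mathcal{C}_1 = \mathbb{R} e^{i\theta_1} \cup \{\infty\}$ and $\mathcal{C}_2 = \mathbb{R} e^{i\theta_2} \cup \{\infty\}$ for some $\theta_1, \theta_2 \in [0, \pi)$. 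Then conjugate by
\[
T = [[\mathrm{diag}(e^{-i\theta_2}, e^{-i\theta_1}, 1)]] \in PSL(3,\mathbb{C}).
\]
This $T$ fixes $e_1, e_2, e_3$, commutes with $\tilde\gamma_L$ (so it preserves our standing normalization), and conjugation by a diagonal matrix scales each off-diagonal entry by a multiplicative character; after conjugation both circles $\mathcal{C}_j$ become $\mathbb{R} \cup \{\infty\}$ in the chosen parametrizations.

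Finally, for any $\gamma \in T\Gamma_0 T^{-1}$ with normalized lift
\[
\tilde\gamma = \begin{pmatrix} a & 0 & c \\ 0 & b & d \\ 0 & 0 & 1 \end{pmatrix},
\]
a direct computation gives the affine formulas $\Pi_2(\gamma)(s) = as + c$ on $\ell_2$ and $\Pi_1(\gamma)(t) = bt + d$ on $\ell_1$. Since $\Pi_j(\Gamma_0)$ must preserve the invariant set $\mathbb{P}^1 \setminus Eq(\Pi_j(\Gamma_0)) = \mathbb{R} \cup \{\infty\}$, each of these affine maps preserves the real line, forcing $a, b \in \mathbb{R}^*$ and $c, d \in \mathbb{R}$. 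Therefore every entry of the lift is real, $T\Gamma_0 T^{-1} \subset PSL(3, \mathbb{R})$, and $\Gamma_0$ preserves $\mathbb{P}^2_{\mathbb{R}}$ up to conjugacy.

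The main subtle point is the simultaneous normalization: we need a single conjugation that both respects the prior choice making $\tilde\gamma_L$ diagonal and straightens two a priori independent circles in $\ell_1, \ell_2$. The diagonal matrix with entries $(e^{-i\theta_2}, e^{-i\theta_1}, 1)$ is essentially the unique choice that does both at once, and the fact that the induced maps $\Pi_j(\gamma)$ depend only on the two entries rotated by the corresponding character is precisely what makes the real entries emerge cleanly from the real-circle invariance.
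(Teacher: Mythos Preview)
Your argument is correct and follows essentially the same route as the paper: use the diagonal double-loxodromic element to pin the two invariant circles through the fixed points $\{e_2,e_3\}\subset\ell_1$ and $\{e_1,e_3\}\subset\ell_2$, then conjugate by a diagonal matrix (the paper's $\eta=\mathrm{diag}(\alpha_2,\alpha_1,1)$, your $T=\mathrm{diag}(e^{-i\theta_2},e^{-i\theta_1},1)$) to send both circles to the real line, and read off that the lifts become real. One small slip: in your parametrization $t\leftrightarrow[0:t:1]$ of $\ell_1$ you have $e_2=[0:1:0]\leftrightarrow\infty$ and $e_3=[0:0:1]\leftrightarrow 0$, not the other way around, but this is harmless since the circle still passes through both.
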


\begin{proof}
By Lemma \ref{l:loxlox} there is an element  $\gamma_0\in \Gamma_0$ such that  $\Pi_1(\gamma_0)$ and $\Pi_2(\gamma_0)$ are loxodromic elements. Thus after conjugating with a projective transformation, if it is necessary, we may assume that $Fix(\Pi_1(\gamma_0))=\{[e_2],[e_3]\}$ and that $Fix(\Pi_2(\gamma_0))=\{[e_1],[e_3]\}$. In consequence $\gamma_0$ has a lift $\tilde\gamma_0\in SL(3,\C)$ given by:
\[
 \tilde \gamma_0=
\left (
\begin{array}{lll}
\gamma_{11}  &  0 & 0\\
0  & \gamma_{22} & 0\\
0  & 0  & \gamma_{22}\\
\end{array}
\right ),
\]
where $\gamma_{11}\gamma_{22}\gamma_{33}=1$. Thus there are $\alpha_1,\alpha_2\in \C^*$ such that:
\[
 \ell_1\setminus Eq(\Pi_1(\Gamma_{0})=[\{r\alpha_1e_2+se_3\vert r,s\in \Bbb{R}\}\setminus\{0\}];
\]
\[
 \ell_2\setminus Eq(\Pi_2(\Gamma_{0})=[\{r\alpha_2e_1+se_3\vert r,s\in \Bbb{R}\}\setminus\{0\}].
\]
Let $\eta\in PSL(3,\C)$ be the element induced by the linear map:
\[
\tilde \eta=
\left (
\begin{array}{lll}
\alpha_2  &  0 & 0\\
0  & \alpha_1 & 0\\
0  & 0  & 1\\
\end{array}
\right ).
\]
Thus a straightforward calculation shows that 
\[
\Pi_1(\eta^{-1}\Gamma_0\eta)[\{re_2+se_3\vert r,s\in \Bbb{R}\}\setminus\{0\}]=[\{re_2+se_3\vert r,s\in \Bbb{R}\}\setminus\{0\}];
\]
\[
\Pi_2(\eta^{-1}\Gamma_0\eta)[\{re_1+se_3\vert r,s\in \Bbb{R}\}\setminus\{0\}]=[\{re_1+se_3\vert r,s\in \Bbb{R}\}\setminus\{0\}].
\]
In consequence $\Bbb{P}^2_\Bbb{R}$ is $\eta^{-1}\Gamma_0\eta$-invariant.
\end{proof}

From now on we will assume that $\Bbb{P}^2_\Bbb{R}$ is $\Gamma_0$-invariant. Also let us define:

\[
Par(\Gamma_0)=\{\gamma\in \Gamma_0\vert \Pi_j(\gamma),j\in \{1,2\}, \textrm{ is either parabolic or the identity} \}. 
\]

\begin{lemma}
  The set  $Par(\Gamma_0)$ is a group isomorphic  to $\Bbb{Z}\times \Bbb{Z}$.
\end{lemma}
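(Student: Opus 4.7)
The plan is to identify $Par(\Gamma_0)$ with a discrete rank-two subgroup of $(\Bbb{R}^2,+)$. First, every $\gamma\in\Gamma_0$ fixes both $e_1$ and $e_2$, so it admits a lift
\[
\tilde\gamma=\begin{pmatrix}\gamma_{11}&0&\gamma_{13}\\ 0&\gamma_{22}&\gamma_{23}\\ 0&0&\gamma_{33}\end{pmatrix};
\]
normalizing $\gamma_{33}=1$ and using the $\Gamma_0$-invariance of $\Bbb{P}^2_\Bbb{R}$, we may take all entries real. Under the identification $\ell_j\cong \Bbb{P}^1$, the transformation $\Pi_j(\gamma)$ is represented by $\left(\begin{smallmatrix}\gamma_{jj}&\gamma_{j3}\\ 0&1\end{smallmatrix}\right)$, which is parabolic or the identity exactly when $\gamma_{jj}=1$. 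Hence $Par(\Gamma_0)$ consists precisely of those $\gamma\in\Gamma_0$ whose lift has the form
\[
\begin{pmatrix}1&0&a\\ 0&1&b\\ 0&0&1\end{pmatrix},\qquad a,b\in\Bbb{R}.
\]
Direct multiplication shows this set is closed under products and inverses, so $Par(\Gamma_0)$ is a subgroup of $\Gamma_0$, and the map $\Phi(\gamma)=(a,b)$ is an injective homomorphism into $(\Bbb{R}^2,+)$. In particular $Par(\Gamma_0)$ is abelian.

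Next I would show that $\Phi(Par(\Gamma_0))$ is a discrete subgroup of $\Bbb{R}^2$: since $\Gamma$ is discrete in $PSL(3,\Bbb{C})$ and $\Phi$ is the restriction of a topological embedding of the unipotent group of such matrices into $PSL(3,\Bbb{C})$, the image inherits discreteness. By the classification of discrete subgroups of $\Bbb{R}^2$, $\Phi(Par(\Gamma_0))\cong \Bbb{Z}^k$ for some $k\in\{0,1,2\}$. Lemma \ref{l:loxlox} furnishes $\gamma_P\in\Gamma_0$ with both $\Pi_j(\gamma_P)$ parabolic (not the identity); consequently $\Phi(\gamma_P)=(\gamma_{P13},\gamma_{P23})$ has both coordinates nonzero, which already forces $k\geq 1$.

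To upgrade to $k=2$, I would conjugate $\gamma_P$ by the diagonal loxodromic element $\gamma_L$. A direct computation gives
\[
\gamma_L\gamma_P\gamma_L^{-1}=\begin{pmatrix}1&0&\gamma_{L11}\gamma_{L33}^{-1}\gamma_{P13}\\ 0&1&\gamma_{L22}\gamma_{L33}^{-1}\gamma_{P23}\\ 0&0&1\end{pmatrix}\in Par(\Gamma_0),
\]
and the $\Bbb{R}$-linear independence of $\Phi(\gamma_P)$ and $\Phi(\gamma_L\gamma_P\gamma_L^{-1})$ reduces, since $\gamma_{P13},\gamma_{P23}\neq 0$, to the single inequality $\gamma_{L11}\neq\gamma_{L22}$. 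This in turn follows from the preceding lemma: the projections $\Pi_j(\gamma_L)$ being loxodromic force $|\gamma_{Ljj}|\neq|\gamma_{L33}|$, and if $|\gamma_{L11}|<|\gamma_{L33}|$ the lemma gives $|\gamma_{L22}|>|\gamma_{L33}|$; the reverse case is symmetric, so $|\gamma_{L11}|\neq|\gamma_{L22}|$ in either case. Combining discreteness, rank $2$, and the classification yields $Par(\Gamma_0)\cong\Bbb{Z}\times\Bbb{Z}$.

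The main obstacle is precisely this final separation $\gamma_{L11}\neq\gamma_{L22}$: without it, conjugation by $\gamma_L$ would only reproduce scalar multiples of $\Phi(\gamma_P)$ and one could a priori remain stuck at rank one. Everything else, namely closure, abelianness, and discreteness, is a formal consequence of the explicit upper-triangular matrix form established at the outset.
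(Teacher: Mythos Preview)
Your proof is correct and follows essentially the same route as the paper: both identify $Par(\Gamma_0)$ with a discrete additive subgroup of $\Bbb{R}^2$ via the map $(\gamma_{ij})\mapsto(\gamma_{13},\gamma_{23})$, and obtain rank two by checking that $\gamma_P$ and its conjugate by $\gamma_L$ give linearly independent vectors, the key computation being that the relevant $2\times 2$ determinant equals $\gamma_{P13}\gamma_{P23}\gamma_{L33}^{-1}(\gamma_{L11}-\gamma_{L22})\neq 0$. If anything, you are slightly more explicit than the paper in justifying $\gamma_{L11}\neq\gamma_{L22}$ by invoking the preceding lemma on the moduli of the diagonal entries.
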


\begin{proof}
 Clearly $ Par(\Gamma_0)$ is a group. Moreover $ Par(\Gamma_0)$ can be lifted to a group $\widetilde{Par(\Gamma_0)}\subset SL(3,C)$ where each element has the form:
\[
\left (
\begin{array}{lll}
1  & 0 & a\\
0  & 1 & b\\
0  & 0 & 1\\
\end{array}
\right ).
\]
where $a,b\in \Bbb{R}$. Also observe that  the group morphism $Lat:\widetilde{Par(\Gamma_0)}\rightarrow \Bbb{R}^2$ given by $Lat ((\gamma_{ij}))=(\gamma_{13},\gamma_{23})$ enable us to show that $ Par(\Gamma_0)$ is isomorphic to a lattice in $\Bbb{R}^2$. Thus to get the claim, will be enought to show that there  are two elements in $Lat(\widetilde{Par(\Gamma_0)})$ which are $\Bbb{R}$-linearly independent. Clearly 
\[
\kappa_1=\left (
\begin{array}{lll}
1  & 0 & \gamma_{L11}\gamma_{P13}\gamma_{L33}^{-1}\\
0  & 1 & \gamma_{L22}\gamma_{P23}\gamma_{L33}^{-1}\\
0  & 0 & 1\\
\end{array}
\right ).
\]
is a lift in $\widetilde{Par(\Gamma_0)}$ of $\gamma_{L}^{-1}\gamma_{P}\gamma{L}$. To conclude  observe that the system of linear equations 
\[
rLa(\kappa_1)+sLa(\gamma_P)=0
\] 
has determinant $\gamma_{P23}\gamma_{P13}\gamma_{L33}^{-1}(\gamma_{L11}-\gamma_{L22})\neq 0$. Which conclude the proof.
\end{proof}

Set
\[
\Bbb{R}(\ell_1)=[\{r\alpha_1e_2+se_3\vert r,s\in \Bbb{R}\}\setminus\{0\}];
\]
\[
\Bbb{R}(\ell_2)=[\{r\alpha_1e_1+se_3\vert r,s\in \Bbb{R}\}\setminus\{0\}];
\]

\begin{proposition}
 The equicontinuity set of $\Gamma$ is given by:
\[
Eq(\Gamma)=\bigcup_{\epsilon_1,\epsilon_2\in\{\pm 1\}} \Bbb{H}^{\epsilon_1}\times \Bbb{H}^{\epsilon_2}.
\]
\end{proposition}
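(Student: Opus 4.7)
The plan is to mirror the endgame of the proof of Proposition~\ref{p:tor2}: first establish $\mathbb{P}^2\setminus\Omega\subseteq\Lambda(\Gamma)$, which yields $Eq(\Gamma)\subseteq\Omega$; then invoke Theorem~3.5 of \cite{bcn} applied to $\Gamma_0$ to obtain $\Omega\subseteq Eq(\Gamma_0)=Eq(\Gamma)$, where $\Omega$ denotes the union of four half-plane products appearing in the statement. One first observes the explicit description $\mathbb{P}^2\setminus\Omega=\bigcup_{p\in\mathbb{R}(\ell_1)}\overleftrightarrow{e_1,p}\cup\bigcup_{q\in\mathbb{R}(\ell_2)}\overleftrightarrow{e_2,q}$.

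For the first inclusion I would compute the $\Gamma_0$-orbit closure of $\ell_1\cup\ell_2$. For $\gamma\in\Gamma_0$, since $\gamma$ fixes $e_2$ and $e_3\in\ell_1\cap\ell_2$, we have $\gamma(\ell_1)=\overleftrightarrow{e_2,\gamma(e_3)}$, a line determined by $\pi_2(\gamma(e_3))=\Pi_2(\gamma)(e_3)\in\ell_2$. By the preceding lemma, $\ell_2\setminus Eq(\Pi_2(\Gamma_0))=\mathbb{R}(\ell_2)$, and $e_3$ lies in this circle as a fixed point of the loxodromic $\Pi_2(\gamma_L)$. Since $\mathbb{R}(\ell_2)$ is the closure of the loxodromic fixed points of $\Pi_2(\Gamma_0)$ (Lemma~\ref{l:disc}(iii)) and is a full circle, $\Pi_2(\Gamma_0)$ contains loxodromic elements with fixed points distinct from $\{e_1,e_3\}$; thus the closed $\Pi_2(\Gamma_0)$-invariant set $\overline{\Pi_2(\Gamma_0)\cdot e_3}$ has more than one point. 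By Lemma~\ref{l:disc}(ii), $\mathbb{R}(\ell_2)\subseteq\overline{\Pi_2(\Gamma_0)\cdot e_3}$, and the reverse inclusion is automatic. Hence $\overline{\Gamma_0\ell_1}\supseteq\bigcup_{q\in\mathbb{R}(\ell_2)}\overleftrightarrow{e_2,q}$, and symmetrically $\overline{\Gamma_0\ell_2}\supseteq\bigcup_{p\in\mathbb{R}(\ell_1)}\overleftrightarrow{e_1,p}$. Because $\ell_1\cup\ell_2\subseteq\Lambda(\Gamma)$ and $\Lambda(\Gamma)$ is closed and $\Gamma$-invariant, these orbit closures lie in $\Lambda(\Gamma)$, so $\mathbb{P}^2\setminus\Omega\subseteq\Lambda(\Gamma)$ and $Eq(\Gamma)\subseteq\Omega(\Gamma)\subseteq\Omega$.

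For the reverse inclusion I would verify the hypotheses of Theorem~3.5 of \cite{bcn} applied to $\Gamma_0$. Openness of $\Omega$ is clear; $\Gamma_0$-invariance follows because lifts of elements of $\Gamma_0$ are real upper-triangular matrices after the $\mathbb{P}^2_{\mathbb{R}}$ normalization, acting on the affine chart as $(x,y)\mapsto(ax+c,by+d)$ with $a,b,c,d\in\mathbb{R}$, which preserves both conditions $\mathrm{Im}(x)\neq 0$ and $\mathrm{Im}(y)\neq 0$. The previous paragraph gives $\Omega\supseteq\Omega(\Gamma)$, hence $LiG(\Omega)\leq LiG(\Omega(\Gamma))=4$; conversely the four real lines $\{x=0\}$, $\{x=1\}$, $\{y=0\}$, $\{y=1\}$ are in general position in $\mathbb{P}^2\setminus\Omega$, so $LiG(\Omega)\geq 4$. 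Theorem~3.5 of \cite{bcn} then yields $\Omega\subseteq Eq(\Gamma_0)$, and since $\Gamma_0$ has finite index in $\Gamma$ (by the counting-lines proposition of Section~\ref{s:prel}), $Eq(\Gamma_0)=Eq(\Gamma)$, completing the argument.

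The main obstacle I foresee is the orbit-density step using Lemma~\ref{l:disc}(ii): one must secure both that $e_3$ lies in the limit circle $\mathbb{R}(\ell_2)$ and that the $\Pi_2(\Gamma_0)$-orbit of $e_3$ has more than one point. The latter rests on the fact that the loxodromic fixed points are dense in $\mathbb{R}(\ell_2)$, not merely that $\Pi_2(\Gamma_0)$ contains a single loxodromic element; a parallel verification is required on $\ell_1$.
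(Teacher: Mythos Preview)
Your argument is essentially the paper's own proof: both directions hinge on the same two ingredients, namely orbit density of a line through a vertex under $\Pi_j(\Gamma_0)$ (via Lemma~\ref{l:disc}) to get $\mathbb{P}^2\setminus\Omega\subset\Lambda(\Gamma)$, and Theorem~3.5 of \cite{bcn} applied to the $\Gamma_0$-invariant set $\Omega$ with $LiG(\Omega)=4$, followed by $Eq(\Gamma_0)=Eq(\Gamma)$ from finite index. The only cosmetic difference is that the paper works with an arbitrary line $l_j\subset\Lambda(\Gamma)$ through the vertex $e_j$, whereas you use the specific reference lines $\ell_1,\ell_2$; your choice is legitimate since the diagonal loxodromic $\gamma_L$ forces $\ell_1\cup\ell_2\subset\Lambda(\langle\gamma_L\rangle)\subset\Lambda(\Gamma)$, though you should state this explicitly. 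One small caution: the chain $Eq(\Gamma)\subseteq\Omega(\Gamma)\subseteq\Omega$ is not the right way to phrase the conclusion, since $Eq(\Gamma)\subseteq\Omega(\Gamma)$ is not a general fact; what you have actually shown is $\mathbb{P}^2\setminus\Omega\subset\Lambda(\Gamma)$, and the passage to $Eq(\Gamma)\subset\Omega$ should instead go through the observation that non-equicontinuity of $\Pi_j(\Gamma_0)$ on $\mathbb{R}(\ell_j)$ pulls back along $\pi_j$ (the paper is equally terse at this step).
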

\begin{proof}
 Let us set
\[
\Omega =\P^2\setminus \bigcup_{j\in \{1,2\}}\bigcup_{p\in \Bbb{R}(\ell_j)}\overleftrightarrow{e_j,p}.
\]
Clearly $\Omega=\bigcup_{\epsilon_1,\epsilon_2\in\{\pm 1\}} \Bbb{H}^{\epsilon_1}\times \Bbb{H}^{\epsilon_2}$ and is a $\Gamma_0$-invariant with $LiG(\Omega)=4$. Thus Theorem 3.5 in \cite{bcn}, yields $\Omega\subset Eq(\Gamma_0).$ On the other hand, let $j\in\{1,2\}$  and $l_j\in Gr_1(\P^2)$ be such that $e_j\in l_j\subset \Lambda(\Gamma)$. Then 
\[
\bigcup_{p\in \Bbb{R}(\ell_j)}\overleftrightarrow{p,e_j}\subset \bigcup_{\gamma\in(\Gamma_0)}\overleftrightarrow{e_j,\Pi_j(\gamma)(\pi_j(l_j\setminus\{e_j\})) }= \overline{\Gamma l_j}.
\]
In consequence $Eq(\Gamma)\subset \Omega$. Finally, since $\Gamma_0$ is a subgroup of finite index of $\Gamma$, we conclude that $Eq(\Gamma_0)=Eq(\Gamma) $, which concludes the proof.
\end{proof}

In the sequel  let $\check \Gamma=\bigcap_{\epsilon_1,\epsilon_2\in \{\pm 1\}} Stab(\Bbb{H}^{\epsilon_1}\times \Bbb{H}^{\epsilon_2}, \Gamma)$. As an immediate consequence one has:

\begin{lemma}
 The group $\check \Gamma$ is a subgroup of $\Gamma$ of index at most 4, which contains $Par(\Gamma_0)$. Moreover $\check \Gamma$ can be lifted to a subgroup $\tilde \Gamma$ of $GL(3,\Bbb{R})$ where each element has the form:
\[
\left (
\begin{array}{lll}
a & 0 & c\\
0 & b & d\\
0 & 0 & 1
\end{array}
\right )
\]
where $a,b>0$ and $c,d\in \Bbb{R}$.

\end{lemma}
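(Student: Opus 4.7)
The plan is to realize $\check\Gamma$ as the kernel of the permutation representation $\psi\colon\Gamma\to\mathrm{Sym}(\pi_0(\Omega))$ on the four connected components of $\Omega$, and to extract everything from the real matrix form of elements of $\Gamma_0$ obtained in the preceding lemmas.

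The containment $Par(\Gamma_0)\subseteq\check\Gamma$ is immediate: every $\gamma\in Par(\Gamma_0)$ lifts to $\bigl(\begin{smallmatrix}1&0&a\\0&1&b\\0&0&1\end{smallmatrix}\bigr)$ with $a,b\in\mathbb{R}$, which in the affine chart acts as the real translation $(z_1,z_2)\mapsto(z_1+a,z_2+b)$; this preserves the sign of each $\mathrm{Im}\,z_j$ and therefore fixes every $\mathbb{H}^{\epsilon_1}\times\mathbb{H}^{\epsilon_2}$ setwise.

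For the other claims I would first show $\check\Gamma\subseteq\Gamma_0$. Any $\sigma\in\Gamma\setminus\Gamma_0$ must swap the two vertices $\{e_1,e_2\}$ of $\mathcal{L}$ and fix $e_3=\ell_1\cap\ell_2$, so it has a real lift of antidiagonal upper-block shape $\bigl(\begin{smallmatrix}0&\alpha&c\\\beta&0&d\\0&0&1\end{smallmatrix}\bigr)$; a direct computation shows that its affine action $(z_1,z_2)\mapsto(\alpha z_2+c,\beta z_1+d)$ sends $\mathbb{H}^{+-}$ to $\mathbb{H}^{-+}$ or $\mathbb{H}^{--}$ (depending on the signs of $\alpha,\beta$) and never back to $\mathbb{H}^{+-}$, so $\sigma\notin\check\Gamma$. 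Within $\Gamma_0$ every element lifts to $\bigl(\begin{smallmatrix}a&0&c\\0&b&d\\0&0&1\end{smallmatrix}\bigr)$ with $a,b\in\mathbb{R}^\ast$ and $c,d\in\mathbb{R}$, whose affine action $(z_1,z_2)\mapsto(az_1+c,bz_2+d)$ stabilizes each $\mathbb{H}^{\epsilon_1}\times\mathbb{H}^{\epsilon_2}$ precisely when $a>0$ and $b>0$. Thus the homomorphism $\Gamma_0\to(\mathbb{Z}/2)^2,\ \gamma\mapsto(\mathrm{sign}\,a,\mathrm{sign}\,b)$ has kernel $\check\Gamma$, yielding simultaneously the prescribed lift of $\check\Gamma$ with $a,b>0$ and the bound $[\Gamma_0:\check\Gamma]\leq 4$.

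The main obstacle is refining the easy bound $[\Gamma:\check\Gamma]\leq[\Gamma:\Gamma_0][\Gamma_0:\check\Gamma]\leq 8$ down to the claimed $\leq 4$: one must verify that the image $\psi(\Gamma)$ in $S_4$ is a Klein four-group, not the full dihedral group $D_4$ permitted a priori by the ``swap plus sign flips'' structure. I would establish this by analyzing the conjugation of a sign-flip element of $\Gamma_0$ by a putative vertex-swap $\sigma$, and showing that the expected non-abelian $D_4$ relations collapse under the constraints forced by $\Gamma$ preserving $e_3$ and the invariant lines $\ell_1,\ell_2$.
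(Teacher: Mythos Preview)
The paper offers no proof here; it records the lemma as ``an immediate consequence'' of the preceding proposition identifying $Eq(\Gamma)$ with the four bidisks. Your argument for $Par(\Gamma_0)\subset\check\Gamma$ and for the real upper-triangular lift with $a,b>0$ is exactly the content of that immediacy and is correct.

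Two local slips are worth fixing. First, an element $\sigma\in\Gamma\setminus\Gamma_0$ need not fix $e_3$; what forces the antidiagonal shape is that $\sigma$ preserves $\overleftrightarrow{e_1,e_2}$ and exchanges the two real pencils $\{z_1\in\Bbb{R}\}$ and $\{z_2\in\Bbb{R}\}$, hence is affine of the form $(z_1,z_2)\mapsto(\alpha z_2+c,\beta z_1+d)$ with $\alpha,\beta,c,d\in\Bbb{R}$. Second, your test on $\Bbb{H}^{+-}$ is miscomputed: when $\alpha,\beta<0$ the map does send $\Bbb{H}^{+-}$ to itself. The conclusion $\sigma\notin\check\Gamma$ survives because such a $\sigma$ swaps $\Bbb{H}^{++}$ and $\Bbb{H}^{--}$, so you should test that component instead.

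The substantive gap is the one you flag yourself: passing from $[\Gamma:\check\Gamma]\le 8$ to $\le 4$. Your proposed resolution is only a plan, and in fact it cannot succeed in general. The permutation image of $\Gamma$ on $\pi_0(\Omega)$ sits inside the dihedral group $D_4$: elements of $\Gamma_0$ contribute the Klein four-group of sign changes, while a swap $\sigma$ with $\mathrm{sign}(\alpha)\ne\mathrm{sign}(\beta)$ acts as a $4$-cycle, so nothing in the hypotheses prevents the image from being all of $D_4$. Conjugation by $\sigma$ merely swaps the two coordinates of the sign homomorphism on $\Gamma_0$, which does not collapse $D_4$ to an abelian group. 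Note that the main Theorem~\ref{t:main} only asserts a toral subgroup of index at most $8$, and since the final lemma shows $\check\Gamma$ itself is toral, the bound $[\Gamma:\check\Gamma]\le 8$ is precisely what is used downstream. The ``$4$'' in the present lemma therefore appears to be an overstatement in the paper; your honest bound of $8$ is the correct one, and you should not spend effort trying to sharpen it.
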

\begin{definition}
From now on $\widetilde{Par(\Gamma_0)}$ will denote  the lift of $Par(\Gamma_0)$ in $\tilde \Gamma$ and $Lat:\widetilde{Par(\Gamma_0)}\rightarrow \Bbb{R}^2 $ will denote the group morphism given by $Lat(\gamma_{ij}) =(\gamma_{13},\gamma_{23})$.
 
\end{definition}

\begin{corollary}
 For each $j\in \{1,2\}$, it follows that  $\Pi_j(\check \Gamma)$ does not contains elliptic elements.
\end{corollary}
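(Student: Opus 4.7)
The plan is to read $\Pi_j(\gamma)$ directly off the real upper-triangular lift supplied by the preceding lemma, and then observe that the relevant $2\times 2$ block has positive real diagonal, forcing its trace squared to be at least $4$.

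More precisely, let $\gamma\in\check\Gamma$ have lift
$$\tilde\gamma=\begin{pmatrix} a & 0 & c \\ 0 & b & d \\ 0 & 0 & 1\end{pmatrix}\in\tilde\Gamma,\qquad a,b>0,\ c,d\in\mathbb{R}.$$
Since $\ell_1=\overleftrightarrow{e_2,e_3}$ and the projection $\pi_1:\mathbb{P}^2_{\mathbb{C}}\setminus\{e_1\}\to\ell_1$ is simply $[w_1:w_2:w_3]\mapsto[0:w_2:w_3]$, the map $\Pi_1(\gamma)$ acts on $\ell_1\cong\mathbb{P}^1_{\mathbb{C}}$ via the $2\times 2$ block $\left(\begin{smallmatrix} b & d \\ 0 & 1\end{smallmatrix}\right)$. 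Normalizing to $SL(2,\mathbb{C})$ gives $\frac{1}{\sqrt{b}}\left(\begin{smallmatrix} b & d \\ 0 & 1\end{smallmatrix}\right)$, whose trace equals $\sqrt{b}+1/\sqrt{b}$.

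Because $b>0$, the AM--GM inequality yields $|\mathrm{tr}|\geq 2$, hence $\mathrm{tr}^2\geq 4$. Recalling that an element of $PSL(2,\mathbb{C})$ is elliptic if and only if its squared trace is real and lies in $[0,4)$, we conclude that $\Pi_1(\gamma)$ is either the identity, parabolic (if $b=1$, $d\neq 0$), or loxodromic with real multiplier (if $b\neq 1$)---never elliptic. The argument for $\Pi_2$ is identical, reading off the block $\left(\begin{smallmatrix} a & c \\ 0 & 1\end{smallmatrix}\right)$ instead and using $a>0$.

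There is no real obstacle: the heavy lifting was already done in the previous lemma, where the diagonal of the lift was forced to consist of positive reals. The corollary is then an elementary trace computation, and one only has to be slightly careful about the choice of square root when passing to $SL(2,\mathbb{C})$, which is immaterial since both choices give $|\mathrm{tr}|\geq 2$.
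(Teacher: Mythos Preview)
Your argument is correct and is exactly the intended one: the paper states the corollary without proof, as an immediate consequence of the preceding lemma giving the real upper-triangular lift with positive diagonal, and your trace computation is the natural way to make that explicit. There is nothing to add.
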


\begin{lemma} \label{l:par2}
 Let $j\in \{1,2\}$ and $\gamma\in \check \Gamma$. If $\Pi_j(\gamma)$ is parabolic, then $\Pi_i(\gamma)\neq Id$, where $j$ is the unique element in $\{1,2\}\setminus\{j\}$.
\end{lemma}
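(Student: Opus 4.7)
The plan is to derive a contradiction from the hypotheses by conjugating by powers of the loxodromic element $\gamma_L$ to produce a sequence of distinct non-identity elements of $\Gamma$ accumulating at the identity. Take without loss of generality $j=1$ and $i=2$, the other case being entirely symmetric. Since $\gamma\in\check\Gamma$, the earlier description of $\tilde\Gamma$ gives $\gamma$ a lift of the form
\[
\tilde\gamma=\begin{pmatrix} a & 0 & c\\ 0 & b & d\\ 0 & 0 & 1\end{pmatrix}
\]
with $a,b>0$ and $c,d\in\Bbb R$.

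I would then read off the induced Möbius transformations to pin down this matrix. Projecting from $e_2$ onto $\ell_2$ shows that $\Pi_2(\gamma)$ acts as $[x:z]\mapsto[ax+cz:z]$, so $\Pi_2(\gamma)=Id$ forces $a=1$ and $c=0$. Projecting from $e_1$ onto $\ell_1$ shows that $\Pi_1(\gamma)$ acts as $[y:z]\mapsto[by+dz:z]$, which is parabolic precisely when $b=1$ and $d\neq 0$. Hence $\tilde\gamma$ reduces to the unipotent form with diagonal entries $1$ and a single non-zero off-diagonal entry $d$ in position $(2,3)$.

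A direct computation, using that $\tilde\gamma_L=\textrm{diag}(\gamma_{L11},\gamma_{L22},\gamma_{L33})$, then gives that the conjugate $\gamma_L^{\,m}\gamma\gamma_L^{-m}\in\Gamma$ has lift
\[
\tilde\gamma_L^{\,m}\tilde\gamma\tilde\gamma_L^{-m}=\begin{pmatrix} 1 & 0 & 0\\ 0 & 1 & d(\gamma_{L22}/\gamma_{L33})^{m}\\ 0 & 0 & 1\end{pmatrix}.
\]
Since $\Pi_1(\gamma_L)$ is loxodromic, $|\gamma_{L22}|\neq|\gamma_{L33}|$; choosing the sign of $m$ so that $|\gamma_{L22}/\gamma_{L33}|^{m}\to 0$ yields an infinite sequence of pairwise distinct non-identity elements of $\Gamma$ converging to the identity in $PSL(3,\Bbb C)$, contradicting discreteness. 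The case $j=2$, $i=1$ is identical with the $(1,3)$ entry and uses $|\gamma_{L11}|\neq|\gamma_{L33}|$, which follows from $\Pi_2(\gamma_L)$ being loxodromic.

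There is no serious obstacle here: once the combination $\Pi_i(\gamma)=Id$ and $\Pi_j(\gamma)$ parabolic is translated into the explicit unipotent matrix above, the discreteness contradiction from conjugating by the diagonal loxodromic $\gamma_L$ is essentially the same contraction pattern already used earlier in the excerpt to show that $\Pi_1(\Gamma_0)$ and $\Pi_2(\Gamma_0)$ both contain loxodromic elements.
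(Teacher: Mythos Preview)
Your proof is correct and follows essentially the same approach as the paper: assume $\Pi_i(\gamma)=Id$ and $\Pi_j(\gamma)$ parabolic, write the resulting unipotent lift, conjugate by powers of the diagonal loxodromic $\gamma_L$, and use $|\gamma_{Lkk}|\neq|\gamma_{L33}|$ to force the off-diagonal entry to $0$, contradicting discreteness. The only cosmetic difference is that the paper fixes the direction by replacing $\gamma_L$ with its inverse if necessary, whereas you choose the sign of $m$; your write-up is in fact more explicit about identifying $\Pi_1,\Pi_2$ with the correct $2\times 2$ blocks.
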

\begin{proof}
 On the contrary let us assume that there $j\in \{1,2\}$ and $\gamma\in \check \Gamma$ such that  $\Pi_j(\gamma)$ is parabolic and $\Pi_i(\gamma)= Id$. Now by taking the inverse of $\gamma_L$ if it is necessary, we may assume that $\mid \gamma_{L13}\mid <1$. Thus an easy calculation shows:
\[
\gamma_L^m\gamma \gamma^{-m}=
\left (
\begin{array}{lll}
1 & 0 & \gamma_{L13}^m\gamma_{13} \\
0 & 1 & 0\\
0 & 0 & 1\\
\end{array}
\right )
\xymatrix{ \ar[r]_{m \rightarrow  \infty}&} Id.
\]
Which is a contradiction.
\end{proof}

\begin{lemma} \label{l:lox2}
 Let $j\in \{1,2\}$ and $\gamma\in \check \Gamma$. If $\Pi_j(\gamma)$ is loxodromic, then $\Pi_i(\gamma)$ is loxodromic, where $j$ is the unique element in $\{1,2\}\setminus\{j\}$.
\end{lemma}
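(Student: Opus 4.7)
The plan is to argue by contradiction, exactly mirroring the template of Lemma \ref{l:par2}: I will conjugate a carefully chosen element by high powers of $\gamma$ and exhibit a sequence of distinct elements of $\Gamma$ converging in $PSL(3,\mathbb{C})$, thereby violating discreteness. Suppose there is $\gamma\in\check\Gamma$ with $\Pi_j(\gamma)$ loxodromic (take $j=1$) but $\Pi_2(\gamma)$ not loxodromic. By the preceding corollary $\Pi_2(\check\Gamma)$ contains no elliptic elements, so $\Pi_2(\gamma)$ is either the identity or parabolic. The real lift of $\check\Gamma$ then forces
\[
\tilde\gamma=\begin{pmatrix} 1 & 0 & c \\ 0 & b & d \\ 0 & 0 & 1 \end{pmatrix},\qquad b>0,\ b\neq 1,\ c,d\in\mathbb{R},
\]
where the top-left $1$ records that $\Pi_2(\gamma)$ is non-loxodromic and $b\neq 1$ records that $\Pi_1(\gamma)$ is loxodromic.

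Next I would use the doubly parabolic element $\gamma_P$, which lies in $Par(\Gamma_0)\subset\check\Gamma$ and has a real lift $\tilde\gamma_P$ whose $(1,3)$ and $(2,3)$ entries $\gamma_{P13},\gamma_{P23}\in\mathbb{R}\setminus\{0\}$ are both nonzero (since both projections are parabolic). A short induction yields
\[
\tilde\gamma^n=\begin{pmatrix} 1 & 0 & nc \\ 0 & b^n & d(b^n-1)/(b-1) \\ 0 & 0 & 1 \end{pmatrix},
\]
and a direct matrix computation, in which the growing $nc$ entry and the geometric sum in the $(2,3)$ slot cancel against their counterparts coming from $\tilde\gamma^{-n}$, simplifies to
\[
\tilde\gamma^n\tilde\gamma_P\tilde\gamma^{-n}=\begin{pmatrix} 1 & 0 & \gamma_{P13} \\ 0 & 1 & b^n\gamma_{P23} \\ 0 & 0 & 1 \end{pmatrix}.
\]

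Choosing $n\to+\infty$ when $0<b<1$ or $n\to-\infty$ when $b>1$, the $(2,3)$ entry $b^n\gamma_{P23}$ tends to $0$, so the sequence $\gamma^n\gamma_P\gamma^{-n}$ converges in $PSL(3,\mathbb{C})$ to the element whose lift is obtained by setting $b^n\gamma_{P23}=0$. Since $b\neq 1$ and $\gamma_{P23}\neq 0$, the values $b^n\gamma_{P23}$ are pairwise distinct, so we have a convergent sequence of distinct elements of $\Gamma$, contradicting discreteness. The main delicate point is checking the cancellation in the conjugation formula above: the hypothesis $b\neq 1$ is precisely what preserves a useful $b^n$ factor in the $(2,3)$ entry while all other growing contributions collapse, and this is exactly what forces the contradiction.
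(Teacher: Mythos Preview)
Your argument is correct, but it follows a different route from the paper. The paper sets $\tau=\gamma\gamma_P$, observes that $\Pi_j(\gamma)$ and $\Pi_j(\tau)$ are loxodromic with distinct fixed-point sets while $\Pi_i(\gamma),\Pi_i(\tau)$ both lie in the abelian parabolic subgroup fixing $e_i$, and concludes that the commutator $\gamma\tau\gamma^{-1}\tau^{-1}$ has $\Pi_j$ parabolic and $\Pi_i$ equal to the identity, which contradicts Lemma~\ref{l:par2}. You instead conjugate $\gamma_P$ directly by powers of $\gamma$ and compute explicitly that $\tilde\gamma^{n}\tilde\gamma_P\tilde\gamma^{-n}$ has $(1,3)$ entry $\gamma_{P13}$ and $(2,3)$ entry $b^{n}\gamma_{P23}$, so a suitable choice of sign for $n$ produces a convergent sequence of distinct elements. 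Your approach is more self-contained---it bypasses Lemma~\ref{l:par2} entirely and yields the contradiction with discreteness in one stroke---at the cost of carrying out the cancellation in $\tilde\gamma^{n}\tilde\gamma_P\tilde\gamma^{-n}$ by hand; the paper's approach is shorter on the page because the heavy lifting was already done in the previous lemma. It is worth noting that the limit of your sequence is exactly an element with $\Pi_2$ parabolic and $\Pi_1$ the identity, i.e.\ the object whose existence Lemma~\ref{l:par2} forbids, so the two arguments are closely related underneath.
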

\begin{proof}
 On the contrary let us assume that there $j\in \{1,2\}$ and $\gamma\in \check \Gamma$ such that  $\Pi_j(\gamma)$ is loxodromic  and $\Pi_i(\gamma)$ is not loxodromic. Thus $\pi_i(\gamma)$ is either $Id$ or a parabolic element. Set $\tau=\gamma\gamma_P$, then $\pi_j(\tau)$ is loxodromic with $Fix(\Pi_j(\tau))\neq Fix(\Pi_j(\gamma))$. Thus an easy calculation shows that $\Pi_j(\gamma\tau\gamma^{-1}\tau^{-1})$ is parabolic  and $\Pi_i(\gamma\tau\gamma^{-1}\tau^{-1})$ is  the identity.  Which contradicts Lemma \ref{l:par2}.
\end{proof}

\begin{definition}
 Let $eLat:\tilde \Gamma \rightarrow GL(2, \Bbb{R}^+)$ given by 
\[
eLat(\gamma_{ij})=
\left (
\begin{array}{ll}
 \gamma_{11} & 0\\
0 & \gamma_{22}
\end{array}
\right )
\]
\end{definition}

\begin{proposition}
 $eLat$ is a group morphism whose  kernel is $\widetilde{Par(\Gamma_0)}$.
\end{proposition}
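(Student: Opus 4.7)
The proposition asserts two things about the map
\[
eLat:\tilde\Gamma\to GL(2,\mathbb R^+),\qquad eLat\bigl((\gamma_{ij})\bigr)=\begin{pmatrix}\gamma_{11}&0\\0&\gamma_{22}\end{pmatrix}:
\]
(a) that it is a homomorphism, and (b) that $\ker(eLat)=\widetilde{Par(\Gamma_0)}$. My plan is to dispose of (a) by a one-line matrix computation, and then set up (b) as a double containment, where both inclusions come from reading off $\Pi_1$ and $\Pi_2$ directly from a lift.

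For the morphism part, I would simply multiply two generic elements of $\tilde\Gamma$. Writing
\[
\gamma^{(k)}=\begin{pmatrix}a_k&0&c_k\\0&b_k&d_k\\0&0&1\end{pmatrix}\qquad(k=1,2),
\]
the product $\gamma^{(1)}\gamma^{(2)}$ has $(1,1)$-entry $a_1a_2$ and $(2,2)$-entry $b_1b_2$, while the third column gets shifted in the upper two coordinates by an $\mathbb R$-affine combination of the $c_i,d_i$. Thus $eLat(\gamma^{(1)}\gamma^{(2)})=eLat(\gamma^{(1)})\,eLat(\gamma^{(2)})$ is immediate.

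For the kernel, the key step is to translate the condition $\Pi_j(\gamma)\in\{\mathrm{Id},\text{parabolic}\}$ into a condition on the diagonal entries. In the affine chart on $\ell_1=\overleftrightarrow{e_2,e_3}$ with coordinate $t=y_2/y_3$, the projection $\Pi_1(\gamma)$ of the matrix above computes to $t\mapsto bt+d$; analogously $\Pi_2(\gamma)$ acts on $\ell_2$ as $s\mapsto as+c$. Since $a,b>0$, these real affine maps are loxodromic when the multiplier differs from $1$, parabolic when the multiplier equals $1$ and the translation is nonzero, and the identity otherwise. Consequently $\gamma\in\ker(eLat)$ (i.e.\ $a=b=1$) if and only if both $\Pi_1(\gamma)$ and $\Pi_2(\gamma)$ are parabolic or the identity. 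I will also note that any such $\gamma$ stabilizes both $e_1=[1:0:0]$ and $e_2=[0:1:0]$, hence belongs to $\Gamma_0$; so by the very definition of $Par(\Gamma_0)$ we have $\ker(eLat)\subset\widetilde{Par(\Gamma_0)}$. The reverse inclusion is the same calculation read backwards: an element of $\widetilde{Par(\Gamma_0)}\subset\tilde\Gamma$ has $\Pi_j$ parabolic or identity for $j=1,2$, which by the affine-map description forces $a=b=1$, placing the element in $\ker(eLat)$.

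I do not anticipate a serious obstacle: everything reduces to reading $\Pi_1$ and $\Pi_2$ off a triangular lift and invoking the trichotomy for real affine maps $t\mapsto\lambda t+\mu$ with $\lambda>0$. The only subtlety to watch is that $\tilde\Gamma$ is the lift in $GL(3,\mathbb R)$ of $\check\Gamma$, so I must remember that these matrices really do stabilize $e_1,e_2$ (so they lie in $\Gamma_0$, making the definition of $Par(\Gamma_0)$ applicable) and that the earlier corollary excluding elliptic elements in $\Pi_j(\check\Gamma)$ is what allows the clean dichotomy ``multiplier $=1$ or $\neq 1$'' without having to discuss elliptic case separately.
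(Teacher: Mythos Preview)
Your argument is correct. The paper actually states this proposition without proof, treating it as immediate from the definitions; your write-up supplies exactly the routine verification the authors omit. The matrix multiplication for part (a) and the translation of $\Pi_j(\gamma)$ into the affine maps $t\mapsto bt+d$ and $s\mapsto as+c$ for part (b) are both accurate, and your observation that the upper-triangular form of $\tilde\Gamma$ already forces $\check\Gamma\subset\Gamma_0$ (so that $Par(\Gamma_0)$ is the right object to compare with) closes the only potential loose end.
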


\begin{proposition} \label{p:tor}
$Lat(\widetilde{Par(\Gamma_0)})$ is invariant, as set of $\Bbb{R}^2$, under the action of the group $eLat(\tilde  \Gamma)$.   
\end{proposition}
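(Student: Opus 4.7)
The plan is to reduce the invariance statement to a conjugation computation inside $\tilde\Gamma$, using the crucial fact (already recorded in the previous proposition) that $\widetilde{Par(\Gamma_0)}$ is the kernel of the morphism $eLat$, hence a normal subgroup of $\tilde\Gamma$.

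Concretely, fix $\gamma\in\tilde\Gamma$ and $\tau\in\widetilde{Par(\Gamma_0)}$; write
\[
\gamma=\begin{pmatrix} a & 0 & c\\ 0 & b & d\\ 0 & 0 & 1\end{pmatrix},\qquad
\tau=\begin{pmatrix} 1 & 0 & x\\ 0 & 1 & y\\ 0 & 0 & 1\end{pmatrix},
\]
with $a,b>0$, $c,d\in\Bbb{R}$ and $Lat(\tau)=(x,y)$. A direct multiplication (using the explicit formula for $\gamma^{-1}$, which has the same triangular shape) shows that the off-diagonal translational entries combine so that the $c,d$ contributions cancel, yielding
\[
\gamma\,\tau\,\gamma^{-1}=\begin{pmatrix} 1 & 0 & ax\\ 0 & 1 & by\\ 0 & 0 & 1\end{pmatrix}.
\]
In particular $Lat(\gamma\tau\gamma^{-1})=(ax,by)=eLat(\gamma)\cdot Lat(\tau)$.

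Since $\widetilde{Par(\Gamma_0)}=\ker(eLat)$ is normal in $\tilde\Gamma$, the element $\gamma\tau\gamma^{-1}$ again lies in $\widetilde{Par(\Gamma_0)}$; consequently $eLat(\gamma)\cdot Lat(\tau)\in Lat(\widetilde{Par(\Gamma_0)})$. Because $\gamma\in\tilde\Gamma$ and $\tau\in\widetilde{Par(\Gamma_0)}$ were arbitrary, this proves that $Lat(\widetilde{Par(\Gamma_0)})$ is invariant under the action of $eLat(\tilde\Gamma)\subset GL(2,\Bbb{R}^+)$ on $\Bbb{R}^2$, which is the claim.

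There is no real obstacle here: the only non-formal ingredient is the normality of $\widetilde{Par(\Gamma_0)}$ in $\tilde\Gamma$, which is automatic from its characterization as the kernel of $eLat$, and the displayed matrix identity $Lat(\gamma\tau\gamma^{-1})=eLat(\gamma)\,Lat(\tau)$. The statement can thus be viewed as asserting that the short exact sequence $1\to\widetilde{Par(\Gamma_0)}\to\tilde\Gamma\to eLat(\tilde\Gamma)\to 1$ is an extension in which the conjugation action of the quotient on the abelian kernel is exactly the linear action through $eLat$, a structural fact that will be essential later when comparing $Lat(\widetilde{Par(\Gamma_0)})$ with the lattice in the toral group models of Section~\ref{s:toral}.
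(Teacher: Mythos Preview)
Your proof is correct and follows essentially the same approach as the paper: both pick $\gamma\in\tilde\Gamma$ and $\tau\in\widetilde{Par(\Gamma_0)}$, compute $\gamma\tau\gamma^{-1}$, observe that it lies in $\widetilde{Par(\Gamma_0)}$ (the paper leaves this implicit, you justify it via normality of the kernel of $eLat$), and read off $Lat(\gamma\tau\gamma^{-1})=eLat(\gamma)\cdot Lat(\tau)$. Your version is slightly more explicit about the matrix computation and the structural interpretation, but the argument is the same.
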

\begin{proof}
 Let $(a,b)\in Lat(\widetilde{Par(\Gamma_0)})$ and $(\gamma_{ij})\in eLat(\tilde  \Gamma)$. Thus there is $\gamma\in \tilde  \Gamma$ and $\tau\in \widetilde{Par(\Gamma_0)} $ such that $eLat(\gamma)= (\gamma_{ij})$ and $Lat(\tau)=(a,b)$.  Thus $\kappa=\gamma\tau\gamma^{-1}\in \widetilde{Par(\Gamma_0)}$ and $Lat(\kappa)=(\gamma_{11}a,\gamma_{22}b) $. 
\end{proof}
\begin{lemma}
 $eLat(\tilde  \Gamma)$ is a conmutative discrete group  with at most 2 generators. 
\end{lemma}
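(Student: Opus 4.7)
The plan is to establish the three assertions---commutativity, discreteness, and the bound on the number of generators---in succession, exploiting that $eLat(\tilde\Gamma)$ sits inside the positive diagonal subgroup of $GL(2,\Bbb{R})$ while preserving a full-rank lattice of $\Bbb{R}^2$.

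Commutativity is immediate from the definition of $eLat$: its image lies in the abelian subgroup of positive diagonal matrices of $GL(2,\Bbb{R})$, which is isomorphic to $(\Bbb{R}^2,+)$ via the entrywise logarithm $\mathrm{diag}(a,b)\mapsto(\log a,\log b)$.

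For discreteness, I would combine Proposition \ref{p:tor} with the earlier lemma identifying $Par(\Gamma_0)$ with $\Bbb{Z}\times\Bbb{Z}$. Since $Lat$ restricted to $\widetilde{Par(\Gamma_0)}$ is injective (two matrices in $\widetilde{Par(\Gamma_0)}$ agreeing in their $(1,3)$ and $(2,3)$ entries must coincide), the image $L:=Lat(\widetilde{Par(\Gamma_0)})$ is a rank-$2$ subgroup of $\Bbb{R}^2$. Furthermore, the proof of that lemma produced elements $\kappa_1$ and $\gamma_P$ whose $Lat$-values are $\Bbb{R}$-linearly independent, so $L$ is in fact a lattice of full rank $2$. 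By Proposition \ref{p:tor}, every element of $eLat(\tilde\Gamma)$ preserves $L$ setwise. The stabilizer in $GL(2,\Bbb{R})$ of a full-rank lattice is conjugate to $GL(2,\Bbb{Z})$ and hence discrete in $GL(2,\Bbb{R})$; intersecting it with the positive diagonal subgroup gives a discrete group containing $eLat(\tilde\Gamma)$, so $eLat(\tilde\Gamma)$ is itself discrete.

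The bound on generators then follows from the logarithm isomorphism: under it $eLat(\tilde\Gamma)$ becomes a discrete subgroup of $(\Bbb{R}^2,+)$, and by the classification of closed subgroups of Euclidean space it must be free abelian of rank at most $2$, yielding at most two generators. The only substantive point in the whole argument is the full-rank claim for $L$, but this is exactly what the earlier lemma delivers through the construction of two $\Bbb{R}$-linearly independent elements of $Lat(\widetilde{Par(\Gamma_0)})$; beyond that, the rest is formal.
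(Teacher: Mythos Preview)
Your proof is correct, and the overall architecture---logarithm to $(\Bbb{R}^2,+)$ plus classification of discrete subgroups for the generator bound---matches the paper. The genuine difference lies in how you establish discreteness. The paper argues by contradiction directly against the discreteness of $\Gamma$: assuming a sequence $\mathrm{diag}(\alpha_m,\beta_m)\in eLat(\tilde\Gamma)$ with $\alpha_m,\beta_m\to 1$, it lifts these to $\gamma_m\in\tilde\Gamma$, then uses the rank-$2$ lattice $Lat(\widetilde{Par(\Gamma_0)})$ to choose translations $\tau_m$ so that the off-diagonal entries of $\gamma_m\tau_m$ stay bounded, whence a subsequence of $(\gamma_m\tau_m)$ converges in $\tilde\Gamma$, contradicting discreteness. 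You instead invoke Proposition~\ref{p:tor} structurally: since $eLat(\tilde\Gamma)$ preserves the full-rank lattice $L$, it lies in a conjugate of $GL(2,\Bbb{Z})$, hence is discrete. Your route is shorter and more conceptual, and avoids a second direct appeal to discreteness of $\Gamma$ (that hypothesis having already done its work in establishing that $L$ has rank~$2$); the paper's route is more hands-on and self-contained in that it does not require knowing the stabilizer of a lattice is discrete. Both are perfectly valid.
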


 \begin{proof}
  Since the map  $\rho:eLat(\check \Gamma)\rightarrow \Bbb{R}^2$ given by
\[
\rho
\left (
\begin{array}{ll}
a & 0\\
0 & b
\end{array}
\right )
=(Log(a),Log(b))
\]
is an isomorphism of groups and from the Bierberbach Theorem (see \cite{rat}),  it will be enought to show  $eLat(\check\Gamma)$ is discrete. If this is not the case, then there are sequences of distinc elements $(\alpha_m)$, $(\beta_m)\in \Bbb{R}^+$ such that $\left (
\begin{array}{lll}
\alpha_m & 0\\
0 & \beta_m
\end{array}
\right )
\in eLat (\check \Gamma)
$
 and $\alpha_m,\,\beta_m\xymatrix{ \ar[r]_{m \rightarrow  \infty}&}
1$. Let $\gamma_m=(\gamma_{i,j}^{(m)})\in \tilde \Gamma$ such that $eLat(\gamma_m)=(\alpha_m,\beta_m)$. Since $Lat(\widetilde{Par(\Gamma_0)}) $ is a lattice of rank 2, it follows that there is a sequence $(\tau_m)\in Lat(\widetilde{Par(\Gamma_0)})$ such that $(Lat(\tau_m)+(\gamma_{13},\gamma_{23}))$ is a bounded sequence. Thus we can assume that there are $c,d\in \Bbb{R}$ such that  $(Lat(\tau_m)+(\gamma_{13}^{(m)},\gamma_{23}^{(m)}))\xymatrix{ \ar[r]_{m \rightarrow  \infty}&} (c,d )$. Now a straightforward calculation shows:
\[
\gamma_m\tau_m=
\left (
\begin{array}{ll}
eLat(\gamma_m) & Lat(\tau_m)+(\gamma_{13}^{(m)}, \gamma_{23}^{(m)})\\
0 & 1
\end{array}
\right  )
\xymatrix{ \ar[r]_{m \rightarrow  \infty}&}
\left (
\begin{array}{lll}
1 & 0 & c\\
0 & 1 & d\\
0 & 0 & 1
\end{array}
\right ),
\]
which is a contradiction.
 \end{proof}

In the sequel  $\sigma$ is a subset of $\check \Gamma$ such that $Card(\sigma)=rank (eLat(\tilde \Gamma))$ and its lift $\tilde \sigma$ in $\tilde \Gamma$ satisfies $<\tilde\sigma>=eLat(\tilde \gamma)$. Also, take  $\tau_0\in \sigma$ be a a fixed element, thus, by conjugating with a projective transformation if it is necessary, we may assume that $\tau_0$ has a lift $\tilde \tau\in \tilde \Gamma$ which is a diagonal matrix. Finally, let  $\gamma_,\gamma_2\in Par(\Gamma_0)$ be  such that $<\gamma_1,\gamma_2>=Par(\Gamma_0)$. 

\begin{lemma}
 It holds that $<\sigma,\gamma_1,\gamma_2>=\check \Gamma$.
\end{lemma}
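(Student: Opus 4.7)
The plan is a straightforward exact-sequence argument: $eLat$ is a surjective morphism from $\tilde\Gamma$ onto $eLat(\tilde\Gamma)$ with kernel $\widetilde{Par(\Gamma_0)}$, so any element of $\tilde\Gamma$ can be written as a product of a lift from the image times an element of the kernel; the hypotheses give generators for both.

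First I would argue that the lift $\tilde\Gamma\subset GL(3,\mathbb{R})$ of $\check\Gamma$ is actually isomorphic to $\check\Gamma$. Indeed, two elements of $\tilde\Gamma$ which coincide in $PSL(3,\mathbb{C})$ differ by a cube root of unity, but since the matrices are real with positive diagonal entries $a,b>0$ and bottom-right entry $1$, the only admissible scalar is $1$. Hence the quotient map $\tilde\Gamma\to\check\Gamma$ is a bijective group morphism, and it suffices to show that $\langle\tilde\sigma,\tilde\gamma_1,\tilde\gamma_2\rangle=\tilde\Gamma$, where $\tilde\gamma_i$ is the lift of $\gamma_i$.

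Next, given an arbitrary $\tilde\gamma\in\tilde\Gamma$, consider $eLat(\tilde\gamma)\in eLat(\tilde\Gamma)$. By the very choice of $\sigma$ we have $\langle\tilde\sigma\rangle=eLat(\tilde\Gamma)$, so there exists a word $w=\tilde\tau_{i_1}^{\pm 1}\cdots\tilde\tau_{i_k}^{\pm 1}$ in the elements of $\tilde\sigma$ with $eLat(w)=eLat(\tilde\gamma)$. Since $eLat$ is a group morphism, $eLat(\tilde\gamma\, w^{-1})=\mathrm{Id}$, so
\[
\tilde\gamma\, w^{-1}\in\ker(eLat)=\widetilde{Par(\Gamma_0)}.
\]
Because $Par(\Gamma_0)=\langle\gamma_1,\gamma_2\rangle$ and the lift is a group isomorphism onto $\widetilde{Par(\Gamma_0)}$, we conclude that $\tilde\gamma\, w^{-1}$ is a word in $\tilde\gamma_1,\tilde\gamma_2$. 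Consequently $\tilde\gamma\in\langle\tilde\sigma,\tilde\gamma_1,\tilde\gamma_2\rangle$, and projecting down to $PSL(3,\mathbb{C})$ yields $\gamma\in\langle\sigma,\gamma_1,\gamma_2\rangle$. The reverse inclusion $\langle\sigma,\gamma_1,\gamma_2\rangle\subset\check\Gamma$ is immediate from the construction, since $\sigma\subset\check\Gamma$ and $\gamma_1,\gamma_2\in Par(\Gamma_0)\subset\check\Gamma$.

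There is no real obstacle here; all the heavy lifting was done in the previous lemmas, namely identifying $\widetilde{Par(\Gamma_0)}$ as exactly the kernel of $eLat$, proving $eLat(\tilde\Gamma)$ is discrete and of rank at most $2$, and realizing the lift inside $GL(3,\mathbb{R})$ uniquely. The only point that requires a line of care is the bijectivity of $\tilde\Gamma\to\check\Gamma$, which legitimizes transferring factorizations from $\tilde\Gamma$ to $\check\Gamma$ without losing information to cube roots of unity.
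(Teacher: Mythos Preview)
Your argument is correct and is essentially the same as the paper's: both use that $eLat$ has kernel $\widetilde{Par(\Gamma_0)}$ and that $\tilde\sigma$ surjects onto $eLat(\tilde\Gamma)$, so every element factors as a word in $\tilde\sigma$ times an element of $\widetilde{Par(\Gamma_0)}=\langle\tilde\gamma_1,\tilde\gamma_2\rangle$. Your version is more carefully written, in particular you make explicit why the lift $\tilde\Gamma\to\check\Gamma$ is bijective (the only real cube root of unity compatible with the normalization is $1$), which the paper leaves implicit.
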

\begin{proof}
Let $\gamma\in \check \Gamma$ and  $\tilde \gamma\in \tilde \Gamma$ be a lift. Thus there is  $\tau \in <\sigma>$ with a lift $\tilde \gamma\in \tilde \Gamma$ such that $eLat(\tilde \gamma)=eLat(\tilde \tau)$. In consequence $\gamma\tau^{-1}\in Par(\Gamma_0)$, which concludes the proof.  
\end{proof}

\begin{lemma}
 It is verified that $\check \Gamma$ is a hyperbolic toral group.
\end{lemma}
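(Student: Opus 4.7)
The plan is to leverage the short-exact-sequence structure built up so far: $\check\Gamma$ is an extension of the discrete abelian diagonal group $eLat(\tilde\Gamma)$ (of rank one or two) by the rank-two translation lattice $\widetilde{Par(\Gamma_0)}$. Concretely, I would pick a $\Bbb{Z}$-basis of $L:=Lat(\widetilde{Par(\Gamma_0)})$ and let $P\in GL(2,\Bbb{R})$ send this basis to the standard basis of $\Bbb{R}^2$; then I would conjugate $\tilde\Gamma$ by $\mathrm{diag}(P,1)\in GL(3,\Bbb{R})$. This single conjugation will simultaneously turn the translation lattice into $\Bbb{Z}^2$ and, via the invariance property proved in the previous Proposition, force the conjugated diagonal matrices into $GL(2,\Bbb{Z})$; combined with positivity of the determinant, they in fact land in $SL(2,\Bbb{Z})$. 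This is exactly the algebraic shape of the groups in Propositions \ref{p:tor1} and \ref{p:tor2}, so the only remaining task is to check that the hypotheses of those propositions are satisfied.

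After the conjugation, every element of $Par(\Gamma_0)$ becomes a translation by a vector in $\Bbb{Z}^2$. The element $\tau_0$, chosen to have a diagonal lift $\mathrm{diag}(\alpha,\beta,1)$, becomes $\begin{pmatrix} A & 0 \\ 0 & 1 \end{pmatrix}$ with $A:=P^{-1}\mathrm{diag}(\alpha,\beta)P\in SL(2,\Bbb{Z})$; since $\tau_0\ne Id$ and $\alpha\beta=\det A=1$, the eigenvalues $\alpha,\alpha^{-1}$ are positive reals distinct from $1$, so $A$ is hyperbolic toral. In the rank-two case, the analogous argument applied to $\tau_1$ yields $\tau_1=\begin{pmatrix} B & \nu \\ 0 & 1 \end{pmatrix}$ with $B\in SL(2,\Bbb{Z})$ hyperbolic toral and commuting with $A$; the freeness of rank two of $eLat(\tilde\Gamma)$ ensures that $A^kB^l\ne Id$ for $(k,l)\ne (0,0)$, which means the corresponding joint eigenvalue is not $1$, and therefore every non-identity element of $\langle A,B\rangle$ is hyperbolic toral as required by Proposition \ref{p:tor2}.

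To see that $\nu$ satisfies the remaining rationality hypothesis, I would use that $eLat$ is abelian, so $[\tau_0,\tau_1]\in\widetilde{Par(\Gamma_0)}$. A direct matrix computation gives $[\tau_0,\tau_1]=\begin{pmatrix} I & (A-I)\nu \\ 0 & 1 \end{pmatrix}$, hence $A\nu-\nu\in\Bbb{Z}^2$. Since $1$ is not an eigenvalue of $A$, the integer matrix $A-I$ is invertible, so $\nu=(A-I)^{-1}(A\nu-\nu)\in\Bbb{Q}^2$. This matches the standing hypothesis of Definition \ref{d:gr}.

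Finally I would identify the conjugate of $\check\Gamma$ with $\Gamma_{A,B,\nu}$ (respectively $\Gamma_A$ in the rank-one case) as sets. Using $\check\Gamma=\langle\sigma,\gamma_1,\gamma_2\rangle$, every element can be written (modulo $\widetilde{Par(\Gamma_0)}$) as $\tau_0^k\tau_1^l$ for unique $(k,l)$, and an inductive computation shows its translation part is $A^k\phi(B,\nu,l)$. Iterating $A\nu\equiv\nu\pmod{\Bbb{Z}^2}$ gives $A^j\nu\equiv\nu\pmod{\Bbb{Z}^2}$ for all $j$, and combined with commutativity of $A,B$ in $SL(2,\Bbb{Z})$ this shows $A^k\phi(B,\nu,l)-\phi(B,\nu,l)\in\Bbb{Z}^2$; consequently every element of the conjugated $\check\Gamma$ has the canonical form $\langle k:l:b:\nu\rangle$ of Definition \ref{d:gr}, and the conjugate equals $\Gamma_{A,B,\nu}$. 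The main technical obstacle is precisely this last bookkeeping: verifying that the accumulated translation error $(A^k-I)\phi(B,\nu,l)$ is always integer, which is where the identity $A\nu\equiv\nu\pmod{\Bbb{Z}^2}$ has to be invoked systematically. Everything else is a clean consequence of the change of basis by $\mathrm{diag}(P,1)$.
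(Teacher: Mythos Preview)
Your proposal is correct and follows essentially the same route as the paper: conjugate by $\mathrm{diag}(P,1)$ (the paper writes $T^{-1}$ with $\hat T=P^{-1}$) to send the translation lattice to $\Bbb{Z}^2$, invoke Proposition~\ref{p:tor} to force the diagonal blocks into $SL(2,\Bbb{Z})$, and compute the commutator $[\tau_0,\tau_1]$ to obtain $A\nu-\nu\in\Bbb{Z}^2$. Your version is actually more complete than the paper's, which stops at this commutator computation (its Claim~1) and neither derives $\nu\in\Bbb{Q}^2$ from the invertibility of $A-I$ nor carries out the final bookkeeping identifying the conjugate of $\check\Gamma$ with $\Gamma_{A,B,\nu}$.
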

\begin{proof}
 Let $(v_{11}, v_{21}), (v_{12}, v_{22})\in \Bbb{R}^2$ be linearly independent vectors such that $Lat(\widetilde{Par(\Gamma_0)})=<v_1,v_2>$. Also set 
\[
\hat T
=
\left (
\begin{array}{ll}
v_{11} & v_{12}\\
v_{21} & v_{22})
\end{array}
\right );\,
T
=
\left (
\begin{array}{ll}
\hat T & 0\\
0 & 1
\end{array}
\right ).
\]
 Thus an easy calculation shows:
\[
 T^{-1}\gamma_j T
=
\left (
\begin{array}{ll}
I  & e_j\\
0 & 1
\end{array}
\right ) \textrm{ for each } j\in\{1,2\}
 \]
\[
 T^{-1}h T
=
\left (
\begin{array}{ll}
\hat T^{-1} h \hat T  & \nu_h\\
0 & 1
\end{array}
\right ) \textrm{ for each } h\in\tilde \sigma
 \]
where $\nu_h=0$ if $h=eLat(\tilde{\tau_0})$. Clearly  Proposition \ref{p:tor}, yields that 
$G=<
\{
\hat T^{-1} h:h\in\tilde \sigma
\}
>$ is a conmutative group, where each  element different from the identity is a hyperbolic toral automorphism and  whose rank is either  1 or 2. To conclude the proof, let us show  the following claim.

Claim 1.- If  there is $h\in\tilde \sigma\setminus\{\tau\}$,  then  $eLat(\tilde{\tau_0})(\nu_{eLat(h)})-\nu_{eLat(h)}\in \Bbb{Z}^2$, where $\tilde h\in \tilde \sigma$ is the lift of $h$. An easy calculation shows:
\[
\tilde \tau \tilde h\tilde \tau^{-1}\tilde h^{-1}=
\left (
\begin{array}{ll}
I & eLat(\tilde{\tau_0})(\nu_{eLat(h)})-\nu_{eLat(h)}\\
0 & 1
\end{array}
\right ).
\]
Which concludes the proof.
\end{proof}

Now theorem \ref{t:main} follows easily.

\bibliographystyle{amsplain}

\end{document}